\numberwithin{equation}{section}
\newtheorem{theo}{Theorem}[section]
\newtheorem{prop}[theo]{Proposition}
\newtheorem{lem}[theo]{Lemma}
\newtheorem{coro}[theo]{Corollary}
\theoremstyle{definition}
\newtheorem{defi}[theo]{Definition}
\newtheorem{rk}[theo]{Remark}
\newtheorem{exa}[theo]{Example}
\newtheorem*{coro*}{Corollary}
\def\scrL{\tilde{\mathscr{F}}}
\def\R{\mathbb{R}}
\def\N{\mathbb{N}}
\def\CC{\mathbb{C}}
\newcommand{\En}{\mathscr{E}}
\newcommand{\IR}{\mathbb{R}}
\newcommand{\F}{\mathscr{F}}
\newcommand{\tete}{(T_t;t\ge 0)}
\DeclareMathOperator{\dom}{Dom}
\newcommand{\m}{m}
\newcommand{\RR}{\mathds{R}}
\newcommand{\NN}{\mathds{N}}
\newcommand{\PP}{\mathds{P}}
\newcommand{\EE}{\mathds{E}}
\newcommand{\One}{\mathbf{1}}
\newcommand{\eex}{\mathds{E}^x}
\newcommand{\set}[2]{\bigl\{#1\bigm|#2\bigr\}}
\begin{document}
\title{Essential spectrum and Feller type properties}
\author{\normalsize
Ali BenAmor\footnote{High school for transport and logistics, University of Sousse, Tunisia, email: ali.benamor@ipeit.rnu.tn}, Batu G\"uneysu \footnote{Faculty of Mathematics, TU Chemnitz, email: batu.gueneysu@math.tu-chemnitz.de
} and Peter Stollmann  \footnote{Faculty of Mathematics, TU Chemnitz, email: stollman@math.tu-chemnitz.de
}}
\date{22.02.2022}

\maketitle
\noindent 
\begin{abstract}
We give necessary and sufficient conditions for a regular semi-Dirichlet form to enjoy a new Feller type property, which we call \emph{weak Feller property}. Our characterization involves potential theoretic as well as probabilistic aspects and seems to be new even in the symmetric case. As a consequence, in the symmetric case, we obtain a new variant of a decomposition principle of the essential spectrum for (the self-adjoint operators induced by) regular symmetric Dirichlet forms and a Persson type theorem, which applies e.g. to Cheeger forms on $\mathsf{RCD^*}$ spaces. 
\end{abstract}
\section{Introduction}

Let $X$ be a locally compact separable metrizable space equipped with a positive Radon measure $m$ with full support. Let $\En$ be a regular semi-Dirichlet form on $L^2(X)$ with $H$ the associated sectorial operator and $T_t:=e^{-tH}$ the associated semigroup. Given a subset $A\subset X$ we denote by $\mathrm{cap}(A)$ the induced capacity, and given an open subset $U\subset X$ we denote by $H_U$ the restriction of $H$ to $U$, which is a sectorial operator in $L^2(U)\subset L^2(X)$. We refer the reader to Section \ref{preliminary} for detailed definitions. The starting point for our investigations is the following result from \cite{LS}:

\begin{theo}\label{alt} Assume $\En$ is symmetric and spatially locally compact, in the sense that $\One_A T_t$ is compact for all $t>0$ and all Borel sets $A\subset X$ with $m(A)<\infty$. Then for all open $U\subset X$ with $\mathrm{cap}(X\setminus U)<\infty$ one has $\sigma_\mathrm{ess}(H_U)=\sigma_\mathrm{ess}(H)$.
 \end{theo}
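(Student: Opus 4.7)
The approach is Weyl's theorem for self-adjoint operators: since $\En$ is symmetric both $H$ and $H_U$ are self-adjoint, so it suffices to show that the resolvent difference
\[
\bigl(H+1\bigr)^{-1}-\bigl(H_U+1\bigr)^{-1}
\]
is compact on $L^2(X)$, with the second term understood as extended by zero from $L^2(U)$. Stability of the essential spectrum under compact perturbations of the resolvent then yields $\sigma_\mathrm{ess}(H)=\sigma_\mathrm{ess}(H_U)$.

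\textbf{Identifying the difference.} For $f\in L^2(X)$, let $u:=(H+1)^{-1}f$ and write the $\En_1$-orthogonal decomposition $u=u_0+w$ in $\mathrm{Dom}(\En)$, with $u_0\in\mathrm{Dom}(\En^U)$ (the closed subspace of functions vanishing q.e.\ on $X\setminus U$) and $w$ in its $\En_1$-orthogonal complement. Then $u_0$ agrees with $(H_U+1)^{-1}(f|_U)$ extended by zero, so the resolvent difference applied to $f$ is exactly $w$. Probabilistically, $w$ is the $1$-balayage of $u$ onto $X\setminus U$, i.e.\ $w(x)=\mathbb{E}^x[e^{-\sigma}u(X_\sigma);\sigma<\infty]$ with $\sigma$ the hitting time of $X\setminus U$. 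The finite capacity hypothesis enters via the $1$-equilibrium potential $e\in\mathrm{Dom}(\En)$ of $X\setminus U$: $e=1$ q.e.\ on $X\setminus U$, $0\le e\le 1$, and $\En_1(e,e)=\mathrm{cap}_1(X\setminus U)<\infty$, so in particular $e\in L^2(X)$. This yields the crucial domination $|w|\le e\,\|u\|_\infty$ in the bounded case, and an $L^2$-variant in general, to the effect that the difference operator factors through multiplication by $e$ composed with the semigroup.

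\textbf{Compactness via spatial local compactness, and the main obstacle.} To close the argument I would exhaust $X$ by Borel sets $A_n\uparrow X$ with $m(A_n)<\infty$. The hypothesis gives $\One_{A_n}T_t$ compact, hence $M_e\One_{A_n}T_t$ is compact (bounded-times-compact). Since $e\in L^2(X)$, dominated convergence gives $\|e\One_{A_n^c}\|_{L^2}\to 0$, and together with the factorization from the previous step this approximates the resolvent difference by compact operators in operator norm, yielding the required compactness. The main obstacle I anticipate lies in this last step: the spatial local compactness hypothesis is stated for sets of finite \emph{measure}, whereas $X\setminus U$ only has finite \emph{capacity} and may have infinite measure. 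The equilibrium potential $e$ is the bridge—finite capacity supplies $e\in L^2$, which in turn tames the tail of the exhaustion—but making the factorization of the resolvent difference through $M_e$ quantitatively precise, in particular upgrading the pointwise bound $|w|\le e\|u\|_\infty$ into an operator-norm approximation scheme that also works for unbounded $u$ (via an $L^2$-variant of the balayage estimate and perhaps a density/cut-off argument on $f$), is where I expect the real technical effort to concentrate.
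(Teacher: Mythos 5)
Your overall architecture is genuinely close to the strategy behind this theorem (which the paper quotes from \cite{LS}; the same method reappears in Lemma \ref{cor-limit} and Theorem \ref{theorem-decomposition-principle}, with semigroups $T_t - T_t^{X\setminus K}$ in place of your resolvents, an immaterial difference): prove compactness of the difference operator by splitting off a piece supported on a finite-measure set, where spatial local compactness applies, and control the remainder in operator norm via the equilibrium potential $e$ of $X\setminus U$, whose membership in $L^2(X)$ is exactly what the finite-capacity hypothesis buys. Your identification of the resolvent difference with the $1$-balayage $w=\EE^\bullet[e^{-\sigma}\tilde u(X_\sigma)]$, and the domination $|w|\le \|u\|_\infty\, e$, are both correct.

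The gap is in the closing step, and your proposed fix (a density/cut-off argument on $f$) will not close it. If you exhaust by \emph{arbitrary} finite-measure sets $A_n$ and try to control the tail by $\|e\One_{A_n^c}\|_2\to 0$, you implicitly need a factorization $D=M_eB$ with $B\colon L^2\to L^\infty$ bounded, i.e. $|Df|\le C e\,\|f\|_2$ pointwise; this amounts to ultracontractivity of $G_1$, which is not available under the hypotheses. The missing ideas are: (i) take as exhaustion the super-level sets of the equilibrium potential itself, $A_n:=\{e>1/n\}$, which have finite measure by Chebyshev precisely because $e\in L^2$; then $\One_{A_n}G_1$ is compact (norm-convergent Laplace integral of the compact operators $\One_{A_n}T_t$) and $\One_{A_n}G_1^U$ is compact by positivity domination ($|G_1^U g|\le G_1|g|$, Dodds--Fremlin), so $\One_{A_n}D$ is compact; and (ii) replace the $L^\infty$ bound on $u=G_1f$ by Cauchy--Schwarz under the hitting expectation: with $\sigma$ the hitting time of $X\setminus U$ and $D:=G_1-G_1^U$,
$$
|Df(x)|^2\le \EE^x\big[e^{-\sigma}\big]\,\EE^x\big[e^{-\sigma}|\widetilde{G_1f}|^2(X_\sigma)\big]\le e(x)\,G_1(f^2)(x),
$$
where the last inequality uses $|G_1f|^2\le G_1(f^2)$ (Cauchy--Schwarz plus sub-Markovianity) and the fact that $G_1(f^2)$ is $1$-excessive, so its expectation at the stopping time is dominated by its value at $x$. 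Integrating over $A_n^c$ and using $\|G_1(f^2)\|_1\le\|f\|_2^2$ gives the operator-norm bound $\|\One_{A_n^c}D\|\le \bigl(\sup_{A_n^c}e\bigr)^{1/2}\le n^{-1/2}$, with no boundedness of $G_1f$ needed anywhere. Then $D$ is a norm limit of compact operators, and Weyl's theorem finishes as you intended. This choice $A_n=\{e>1/n\}$, together with the quantitative tail estimate, is precisely the mechanism of \cite[Corollary 2.5]{LS}, which the paper recalls as Lemma \ref{cor-limit} (there with $K_n\supset\{e_K>1/n\}$).
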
	

Although the class of $U$'s considered in this result is very large (in fact, somewhat optimal), the restriction on the underlying geometry through the spatial local compactness assumption is rather strong: for example, there exist $\mathsf{RCD}^*$ spaces of finite measure such that the (operator induced by the) Cheeger form (cf. Example \ref{ex2} for the definitions) does not have a purely discrete spectrum and thus is not spatially locally compact. Nontrivial examples of such spaces are provided by the natural Dirichlet forms (cf. Example \ref{ex1}) of certain complete Riemannian manifolds with Ricci curvature bounded from below: indeed, noncompact hyperbolic manifolds with finite volume are never spatially locally compact (as these always have an nonempty essential spectrum \cite{mazzeo}). The main goal of this paper is to obtain a variant of Theorem $\ref{alt}$, which allows to treat geometries such as arbitrary $\mathsf{RCD}^*$ spaces. To this end, we found a surprising connection between such results and probability theory. Namely, we say that $\En$ has the \emph{weak Feller property}, if the following two properties are satisfied:
\begin{itemize}
	\item[($\alpha)$] with $L^\infty_0(X)$ the space of all $u\in L^\infty(X)$ such that for all $\epsilon>0$ there exists $K\subset X$ compact such that $\|1_{X\setminus K}u \|_\infty <\epsilon$, one has 
\[
T_t(L^\infty_0(X))\subset L^\infty_0(X)\quad\text{for all $t>0$.}
\]
\item[$(\beta)$] For any compact  $K\subset X$  there exists a function $0\leq w\in L^2(X)\cap L^\infty_0(X)$ with $(H+1)^{-1 } w \ge \One_K $. 
\end{itemize}
Recall that $\En$ is said to \emph{induce a Feller semigroup}, if
\begin{align}\label{fel}
	&T_t(C_0(X))\subset C_0(X)\quad\text{for all $t>0$},\\
	&\| T_t\phi -\phi\|_\infty\to 0\quad\text{  as  $t\to 0+$  for all $\phi\in C_0(X)$},
\end{align}
with $C_0(X)$ the space functions on $X$ that vanish at $\infty$. In general, this property implies the weak Feller property and these notions are equivalent on Riemannian manifolds, but there exist $\En$'s which have the weak Feller property but not the Feller property (cf. Section \ref{diffusionvshitting} and Section \ref{ex}). We provide a list of equivalent characterizations of the weak Feller property under the above condition ($\alpha$), one of which is the traditional probabilistic condition that compact sets are hard to hit from close to infinity by the underlying diffusion: namely, whenever $K$ is compact and $\sigma_K$ denotes the induced first hitting time, then for all $t\geq 0$ one has  
\begin{equation}\label{hardtohit}
	\PP^\bullet\{ \sigma_K\le t\}\in L^\infty_0(X),
\end{equation}
keeping in mind that by the seminal work of Azencott \cite{A} it is well-known that \emph{in the Riemannian case}, (\ref{hardtohit}) is \emph{equivalent} to the Feller property.\\
With this definition, one of our main results is:

\begin{theo}\label{intro1} Let $\En$ be symmetric and satisfy the weak Feller property, and let $\En$ be \emph{weakly} spatially local compact, in the sense that $\One_KT_t$ is compact for all compact $K\subset X$ and all $t>0$. Then for every open $U\subset X$ with $X\setminus U$ compact one has $\sigma_{\mathrm{ess}} (H_U) = \sigma_{\mathrm{ess}} (H)$.
\end{theo}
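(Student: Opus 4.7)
My plan is to reduce the essential-spectrum equality to a semigroup compactness assertion. Writing $K := X \setminus U$, which is compact by hypothesis, and denoting by $T_t^U := e^{-tH_U}$ the Dirichlet semigroup extended by zero from $L^2(U)$ to $L^2(X)$, set $A_t := T_t - T_t^U$. Since $\En$ is symmetric, both $H$ and $H_U$ are self-adjoint, so once I prove that $A_t$ is compact on $L^2(X)$ for some $t > 0$, Weyl's theorem for self-adjoint operators applied to the bounded operators $T_t$ and $T_t^U$ together with the spectral mapping theorem for semigroups of self-adjoint operators will yield $\sigma_{\mathrm{ess}}(H) = \sigma_{\mathrm{ess}}(H_U)$.

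To analyze $A_t$, I invoke the Dynkin--Hunt formula for the Hunt process $(X_t)$ associated to $\En$:
\[
A_t f(x) \;=\; \EE^x\bigl[f(X_t);\; \sigma_K \le t\bigr] \qquad \text{for } m\text{-a.e. } x \in X.
\]
Compactness of $A_t$ is then verified by establishing tightness at infinity together with local compactness. For tightness, Cauchy--Schwarz inside the expectation gives the pointwise bound
\[
|A_t f(x)|^2 \;\le\; T_t(f^2)(x) \cdot g_t(x), \qquad g_t(x) := \PP^x(\sigma_K \le t),
\]
and the hitting-time characterization (\ref{hardtohit}) of the weak Feller property places $g_t$ in $L^\infty_0(X)$. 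Combined with the $L^1$-contractivity of the sub-Markovian semigroup $T_t$ this yields
\[
\int_{X\setminus K'} |A_t f|^2 \, dm \;\le\; \|g_t\, \One_{X\setminus K'}\|_\infty \, \|f\|_2^2,
\]
which vanishes as $K'$ exhausts $X$ through compact subsets, uniformly for $f$ in any bounded set of $L^2(X)$. For local compactness, the Markov domination $0 \le T_t^U \le T_t$ yields $0 \le \One_{K'} A_t \le \One_{K'} T_t$ as positive operators; weak spatial local compactness makes the majorant compact, and the Dodds--Fremlin domination theorem for positive operators then forces $\One_{K'} A_t$ itself to be compact for every compact $K' \subset X$. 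A standard diagonal-extraction argument over a compact exhaustion $K_j \uparrow X$ now produces, from any $L^2$-bounded sequence, a subsequence along which $A_t$ is Cauchy in $L^2(X)$, completing the proof of compactness.

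The two hypotheses enter in complementary ways: weak spatial local compactness handles $A_t f$ on compact sets, while the hitting-time content of the weak Feller property handles it far from compact sets. I expect the main technical obstacle to be the passage from the purely $L^\infty$-type decay of $g_t$ at infinity, which is all that weak Feller directly provides, to the $L^2$-tightness estimate actually needed for $A_t$ on bounded $L^2$-data; the Cauchy--Schwarz bound above, which separates an $L^1$-controlled mass factor from a vanishing-at-infinity indicator factor, is the device that bridges this gap.
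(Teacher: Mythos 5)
Your proof is correct and is essentially the paper's own argument: the paper (Lemma \ref{cor-limit} and Theorem \ref{theorem-decomposition-principle}, following the method of \cite{LS}) likewise uses the Dynkin--Hunt formula and a Cauchy--Schwarz estimate against $\PP^{\bullet}\{\sigma_K\le t\}\in L^\infty_0(X)$ to approximate $T_t-T_t^{X\setminus K}$ in operator norm by compactly localized pieces, whose compactness comes from weak spatial local compactness together with positivity--domination. The remaining differences are cosmetic: the paper localizes on super-level sets of the equilibrium potential $e_K$ (equivalent to your use of the hitting probability, by Theorem \ref{main2}) and concludes via compactness of $\varphi(H_{X\setminus K})-\varphi(H)$ for all $\varphi\in C_0(\RR)$, while you use an arbitrary compact exhaustion and finish with Weyl's theorem for the semigroups plus spectral mapping.
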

The weak Feller property enters the proof of Theorem \ref{intro1}  precisely in the form (\ref{hardtohit}), through a method from \cite{LS}.\\
Concerning the assumptions of Theorem \ref{intro1} we remark that the natural Dirichlet form on a connected Riemannian manifold is automatically weakly spatially locally compact and satisfies the condition ($\beta$), and has the (weak) Feller property, e.g., if the manifold is complete and its Ricci curvature does not decay to fast to $-\infty$; likewise, the Cheeger energy of an arbitrary $\mathsf{RCD}^*$ space as well as many jump diffusion Dirichlet forms satisfy all assumptions of the above theorem (cf. Example \ref{ex1}, Example \ref{ex2}, Example \ref{ex3} and Section \ref{decomposition}). \\
Finally, we obtain a variant of Persson's theorem, which states that under the same assumptions on $\En$ as in Theorem \ref{intro1} one has 
	$$
	\inf \sigma_{\mathrm{ess}} (H) =\lim_{K\to X} \inf \sigma (H_{X \setminus K}),
	$$
and we show that these assumptions on $\En$ are satisfied, if $\En$ has the doubly Feller property in the sense of \cite{KKT}, that is, if $\En$ has the Feller property in addition to $T_t(L^\infty(X))\subset C_0(X)$ for all $t>0$.\vspace{1mm}

For completeness, we have also included an appendix, where we show that every semi-Dirichlet form whose semigroup satisfies (\ref{fel}) is automatically regular, a result that comes in handy e.g. for possibly nonsymmetric jump diffusions.

\section{Preliminaries}\label{preliminary}

For the standard terminology concerning Dirichlet forms we refer to \cite{FOT} for the symmetric and to \cite{MR,Oshima} for the non-symmetric case. In the sequel, $\left\|\cdot \right\|_p$ for $p\in [1,\infty]$ denotes the norm on $L^p(X)$. Following \cite{Oshima}, we say that a pair $(\En,\F)$ (or shortly $\En$, if there is no danger of confusion) is a \emph{semi-Dirichlet form on $L^2(X)$}, if the following assumptions are satisfied:
\begin{itemize}
 \item $\F$ is a dense subspace of $L^2(X)$ and $\En:\F\times\F\to\RR$ is bilinear and $\En[u]:=\En(u,u)\ge 0$.
 \item Sector condition: there exists a constant $K\ge 1$ such that
$$
|\En(u,v)|\le K\En(u,u)^{\frac{1}{2}}\En(v,v)^{\frac{1}{2}} .
$$
\item $\F$ is complete w.r.t. $\|\cdot\|_{\En}$, where 
$$
\| u \|_{\En}^2:= \En[u]+\| u\|^2.
$$
\item Markovian property: for all $u\in\F$, $a\ge  0$, one has 
\begin{equation}\label{markovian}
	\quad u\wedge a \in \F\mbox{ and }\En(u \wedge a, u - u\wedge a)\ge 0.
\end{equation}
\end{itemize}

In the above situation, we get a maximally sectorial operator $H$ on $L^2(X)$ associated with $\En$ by Kato's first form representation theorem, \cite[VI,Thm 2.1, p. 322]{Kato} and for $t\geq 0$, $\alpha>0$ we write
$$
T_t:=e^{-tH}, \quad G_\alpha:=(H+\alpha)^{-1}
$$
for the corresponding contraction semigroup and resolvent, respectively, see \cite[Thm 1.1.2, p. 4]{Oshima}. As a consequence of \eqref{markovian}, the semigroup $\tete$ is a sub-Markov semigroup \cite[Thm 1.1.5, p. 7]{Oshima} and so extends in a $p$-consistent way to a contraction semigroup $L^p(X)$ for all $p\in [1,\infty]$ which is strongly continuous for $p<\infty$ and weak-*-continuous for $p=\infty$. Likewise, $G_\alpha$ is bounded in $L^p(X)$ for all $p\in [1,\infty]$. To ease notation, we do not distinguish between these semigroups and the resolvents just write $T_t$ resp. $G_\alpha$ for all of them. \vspace{1mm}

The semi-Dirichlet form $(\En,\F)$ is called \emph{regular}, if $\F\cap C_c(X)$ is dense in $\F$ with respect to $\|\cdot\|_{\En}$ and in $C_c(X)$ with respect to $\left\|\cdot\right\|_\infty$.\vspace{1mm}

We fix once for all a regular semi-Dirichlet form $(\En,\F)$ on $L^2(X)$.\vspace{2mm}

Then $\En$ is called \emph{symmetric}, if one has $\En(u,v)=\En(v,u)$ for all $u,v\in \F$, noting that a regular symmetric semi-Dirichlet in our sense is automatically a regular symmetric Dirichlet form in the standard sense of Fukushima \cite{Fukushima}. Moreover, $\En$ is called \emph{strongly local}, if $\En(u,v)=0$, whenever $u,v\in\F$ are such that $u$ is constant on the support of $v$.\vspace{1mm}

We set
$$
\En_\alpha(u,v):=\En(u,v)+\alpha\int_X uv \ dm,\quad \En_\alpha(u):=\En_\alpha(u,u), \quad\text{for all $\alpha>0$},
$$
and remark that the induced Choquet capacity is defined on open subsets $U\subset X$ by
$$
\mathrm{cap}(U):=\inf \set{\En_1(v)}{v\in\F, \One_U\leq v},
$$
with the usual convention $\inf \emptyset=\infty$, and for arbitrary $A\subset X$ one then sets
$$
\mathrm{cap}(A):= \inf \set{\mathrm{cap}(U)}{ A\subset U, U\mbox{  open}}.
$$
With respect to this capacity, every $u\in \F$ has a (quasi-)unique quasi-continuous representative $\tilde{u}$, and keeping this in mind, for every set $B$ one sets
$$
\scrL_{B,1}:= \set{u\in\F}{\tilde{u}\geq \One_B \>\>\text{q.e.}},
$$
so that for open $U\subset X$ one has
$$
\mathrm{cap}(U)=\inf \set{\En_1(v)}{ v\in  \scrL_{U,1}}.
$$
The regularity of $\En$ implies (through the existence of cut-off functions) that 
$$
\mathrm{cap}(K)<\infty\quad\text{for all compact $K\subset X$.}
$$

We will also be concerned with the Hunt process 
$$
\mathscr{M}:=(\Omega,(\PP^x;x\in X),(X_t;t\ge 0))
$$
associated with $\En$ (see \cite{FOT,MR},\cite[Section 3.3]{Oshima} and the groundbreaking \cite{Fukushima}), with lifetime $\zeta\in (0,\infty]$; it givesthe following probabilistic representation for the semigroup: for all $t>0$, $f\in L^2(X)$ and $m$-a.e. $x\in X$ one has
$$
T_tf(x)=\EE^x\{ \One_{\{t<\zeta\}} f(X_t) \} .
$$

The restriction of $\En$ to $\overline{ \F \cap C_c (U)}^{\|\cdot\|_{\En}}$ is a regular semi-Dirichlet form on $L^2 (U)\subset L^2(X)$, \cite[Section 3.5, in particular Thm 3.5.7]{Oshima}. This form will be denoted by $\En_U$. The maximally sectorial operator
associated to $\En_U$ will be denoted by $H_U$ and its semigroup with $(T^U_t;t\geq 0)$.\vspace{1mm}

The \emph{first exit time} of $\mathscr{M}$ from a Borel set $B$ is defined by
$$
\tau_B:= \inf \set{t> 0}{X_t\not\in B}
$$
and the \emph{first hitting time} of $\mathscr{M}$ of $B$ is defined by
$$
\sigma_B:= \tau_{X\setminus B}=\inf \set{t> 0}{X_t \in B} .
$$
The following form of the Feynman-Kac formula allows a probabilistic interpretation of the semigroup generated by $H_U$: for all $f\in L^2(U)$, $t> 0$, $m$-a.e. $x\in U$ one has
\begin{equation}\label{Feynman-Kac}
T^U_tf(x) =\eex\left[\One_{\{t<\tau_U\}} f( X_t) \right]
\end{equation}
(see \cite[Theorem 3.5.7, p. 100 and its proof]{Oshima}, see also \cite{CF}, Section 3.3, in particular Thm 3.3.8, p. 109f. for the symmetric case). We will view bounded operators in $L^p(U)$ to be acting on $L^p(X)$, by defining them to be $0$ on $L^p(X\setminus U)$.\vspace{1mm}

We finally introduce some potential theoretic notions that will be needed in the sequel: let $u\in L^p(X)$ for some $1\le p\le \infty$. We say that $u$ is \emph{$1$-excessive}, if $e^{-t}T_t u \leq u$ for all $t>0$.\vspace{1mm}

The \emph{$1$-equilibrium potential} $e_B$ for a Borel set $B$ is defined by 
\begin{equation}
e_B (x) = \eex(e^{-\sigma_B}) .
\label{potential-expectation}
\end{equation}
Actually, this is a little shortcut that is convenient for what we have in mind. Typically, $e_B$ is introduced by a variational principle, and \eqref{potential-expectation} is then deduced as an important link between the stochastic and the analytic world. In this spirit, the RHS of the definition of $e_B$ is typically denoted as $p^1_B(x)$ or $H_B1(x)$, see \cite[Lemma 3.1.1]{CF} or \cite[Lemma 3.4.3]{Oshima}.

\begin{rk}\label{potential}\begin{itemize}
           \item [(1)] In the symmetric case, for any Borel set $B$ such that $\scrL_{B,1}\not=\emptyset$, the function $e_B$ is the unique function in $\scrL_{B,1}$ which satisfies
$$
\En_1(e_B,e_B)=\min \set{\En_1(u)}{u\in \scrL_{B,1}},
$$
and one then has
$$
\mathrm{cap}(B)=\En_1(e_B,e_B),
$$
see \cite[p. 78, p.105]{CF}.
\item [(2)] In the non--symmetric case, at least for open $B$, \cite[Lemma 3.4.3]{Oshima}
gives an analogous statement, however, the variational problem is somewhat different then.
\item [(3)]  \cite[Proposition 2.8 (iii)]{MOR} gives, that for all open $B$ and all $1$-excessive $u\in\scrL_{B,1}$ one has $e_B\le u$, another variational property of $e_B$ that will be of prime importance later.
\item [(4)] If $X$ is a connected Riemannian manifold with $m$ its volume measure and $\En$ is given by 
$$
\En(u,v)=\int_X (\nabla u,\nabla v) dm\quad\text{with domain of definition $\F:= W^{1,2}_0(X)$,}
$$
then for any open relatively compact $U\subset X$ the function $e_U$ is the minimal nonnegative solution of the exterior boundary value problem
\begin{equation}\label{ddssdd}
 \begin{aligned}
  \Delta u &= u\mbox{ in }X\setminus \overline{U}\\
  u&>0 \mbox{  on  }U^c\\
  u&=1 \mbox{  on }\partial U.
 \end{aligned}
\end{equation}
\item [(5)] Let $U\subset X$ be open and relatively compact. Then there is a unique positive $\sigma$-finite Borel measure $\mu_U$ on $X$, supported in $\overline U$, such that $e_U$ is the $1$-potential of $\mu_U$, that is,
\[
\mathscr{E}_1(e_B,u) = \int_X u\,d\mu_U\quad\text{ for all $u\in\F$},
\]
and one has $\mathrm{cap}(U) = \mu_U(\overline U)$. If $G_1$ has an integral kernel $\kappa_1(x,y)$, then one has 
\begin{equation}
e_U(x) = \int_X \kappa_1(x,y)\,d\mu_U(y).
\label{PotRepresentation}
\end{equation}
Indeed, let $\kappa_1^* (x,y):=\kappa_1 (y,x)$ and let $G_1^*$ be the dual resolvent of $G_1$ ($G_1^*$ is in fact the resolvent of the dual form $\mathscr{E}^*(u,v)= \mathscr{E}(v,u)$). Obviously $\kappa_1^*$ is the kernel of $G_1^*$. Let $u\in L^2(X), u\geq 0$. Then
\begin{align*}
	\mathscr{E}_1( e_U, G_1^*u) & = \int_X G_1^*u(x)\,d\mu_U(x) = \mathscr{E}_1^*( G_1^*u, e_U) = \int_X u(y) e_U(y)\,dm(y)\\ 
	& = \int_X\int_X \kappa_1^*(x,y)u(y)\,dm(y)\,d\mu_U(x)\\
	&=\int_X u(y)\cdot(  \int_X \kappa_1^*(x,y)\,d\mu_U(x))\,dm(y),
\end{align*}
and the claim follows, by extending the latter identity to every $u\in L^2(X)$ through $u = u^+ - u^-$.

\end{itemize}
\end{rk}
\section{The weak Feller property}\label{diffusionvshitting}
Recall that by definition $\En$ induces a Feller semigroup, if one has (\ref{fel}). The generalization of this property that we seek for relies on the space
\[
L^\infty_0(X):=\set{ u\in L^\infty(X)}{\text{for all}\ \epsilon>0,\,\exists\ K\subset X\ \text{compact s.t.}\ 
\|u 1_{X\setminus K}\|_\infty <\epsilon }
\]
of bounded functions that vanish at $\infty$ in the measure theoretic sense.
\begin{defi}($\alpha$)
 We say that $\En$ satisfies the $L^\infty_0$--\emph{diffusion property}, provided
\begin{equation}
 T_t(L^\infty_0(X))\subset L^\infty_0(X)\label{l0diffusion} .
\end{equation}
 ($\beta$) We say that $\En$ satisfies property \textbf{(P)}, if for any compact  $K\subset X$  there is $w\in L^2(X)\cap L^\infty_0(X), w\ge 0$  such that $G_1 w \ge \One_K$.

($\gamma$) We say that $\En$ satisfies the \emph{weak Feller property}, if it has the $L^\infty_0$--diffusion property and if \textbf{(P)} holds.
\end{defi}

We discuss a number of sufficient conditions:

\begin{rk}\label{remarkpositivity}
 \begin{itemize}
  \item [\rm{(1)}] The following property implies $\rm{\textbf{(P)}}$: \\
  \textbf{(P'')}:  for any compact  $K\subset X$  there exists $w\in L^2(X)\cap L^\infty_0(X), w\ge 0$ and $t'>0$ such that for all $0<t<t'$ one has $T_t w \ge \One_K$, which follows from 
  $$
  G_1 =\int_0^\infty e^{-t}T_t dt,
  $$
  where the improper Riemannian integral is well-defined in the norm topology of $L^p(X)$ if $p\in [1,\infty)$, and in the weak-*-topology of $L^\infty(X)$.

  \item [\rm{(2)}] If $\En$ induces a Feller semigroup, then it also has the weak Feller property: indeed, given a compact $K\subset X$ pick $0\leq w\in C_c(X)$ with $w\geq c$ in $K$ for some $c>0$. Then, since $t\mapsto T_t$ is strongly continuous at $t=0$ in $C_0(X)$, we can pick a $c'>0$ and $t'>0$ such that $T_tw\geq c'$ in $K$ for all $0<t<t'$, so that the claim follows from the previous item.
  
  \item [\rm{(3)}] Property \rm{\textbf{(P)}} is equivalent to:\\
\textbf{(P')}: there is $w\in L^2(X)\cap L^\infty_0(X),w\ge 0$ such that for any compact $K\subset X$ one has
\begin{equation}\label{positivity}
G_1 w\ge \One_K .
\end{equation}
\item [\rm{(4)}] By local compactness and the semigroup property, the $L^\infty_0$--diffusion property is equivalent to the following property:
\begin{equation*}
T_t\One_K\in  L^\infty_0(X)\quad\text{ for all compact $K\subset X$, $0<t<1$.}
\end{equation*}
 \end{itemize}
\end{rk}

We are now in position to give a characterization of the weak Feller property:
\begin{theo}\label{main2}
Under assumption \textbf{(P)}, the following assertions are equivalent:
\begin{itemize}
  \item [\rm{(1)}]  $\En$ satisfies the $L^\infty_0$--diffusion property.
  \item [\rm{(2)}]  For any compact $K\subset X$, $t\ge 0$ one has $\PP^{\bullet}\{\sigma_K\le t\}\in  L^\infty_0(X).$
 \item [\rm{(3)}]  For any compact $K\subset X$ one has $e_K\in  L^\infty_0(X).$
 \item [\rm{(4)}]  For any compact $K\subset X$ there exists an $1$-excessive function $\phi\in\mathscr{F}$ with $\One_K\le \phi\in L^\infty_0(X).$
\end{itemize}	
\label{Weak-Feller}
\end{theo}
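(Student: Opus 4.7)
The plan is to establish the cycle $(2)\Leftrightarrow(3)\Rightarrow(4)\Rightarrow(1)\Rightarrow(4)$, which yields all four equivalences; property $\mathbf{(P)}$ is used essentially only in the last step. For $(2)\Leftrightarrow(3)$, Fubini applied to $e^{-\sigma_K}=\int_0^\infty e^{-t}\One_{\{\sigma_K\le t\}}\,dt$ gives the representation $e_K(x)=\int_0^\infty e^{-t}\PP^x\{\sigma_K\le t\}\,dt$, while the reverse bound $\PP^x\{\sigma_K\le t\}\le e^t e_K(x)$ follows from $e^{-\sigma_K}\One_{\{\sigma_K\le t\}}\ge e^{-t}\One_{\{\sigma_K\le t\}}$. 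Since $L^\infty_0(X)$ is closed in $L^\infty(X)$ and closed under pointwise domination, both directions follow by a standard truncation of the $t$-integral (the tail $\int_N^\infty e^{-t}\PP^\bullet\{\sigma_K\le t\}\,dt$ is bounded by $e^{-N}$ in $L^\infty$, while the remaining piece is dominated by $\PP^\bullet\{\sigma_K\le N\}$).

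For $(3)\Rightarrow(4)$ the natural choice is $\phi:=e_K$: its $1$-excessivity comes from the pathwise inequality $\sigma_K\le t+\sigma_K\circ\theta_t$ and the Markov property, it lies in $\F$ because $\mathrm{cap}(K)<\infty$ for compact $K$ (Remark~\ref{potential}(1)), and $e_K\ge\One_K$ q.e.\ by construction. For $(4)\Rightarrow(1)$, given $f\in L^\infty_0$ and $\epsilon>0$, pick a compact $K$ with $\|f\One_{X\setminus K}\|_\infty<\epsilon$ and a $1$-excessive majorant $\phi\in L^\infty_0\cap\F$ of $\One_K$ provided by (4); decomposing $T_t f=T_t(f\One_K)+T_t(f\One_{X\setminus K})$, the tail contributes $\le\epsilon$ in $L^\infty$-norm by sub-Markovianity, while $T_t(f\One_K)\le\|f\|_\infty T_t\phi\le e^t\|f\|_\infty\phi\in L^\infty_0$ via the $1$-excessivity bound $T_t\phi\le e^t\phi$, so $T_tf\in L^\infty_0$.

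The heart of the argument is $(1)\Rightarrow(4)$. By $\mathbf{(P)}$, pick a nonnegative $w\in L^2\cap L^\infty_0$ with $G_1w\ge\One_K$ and set $\phi:=G_1w$; the computation $T_tG_1w=e^t\int_t^\infty e^{-s}T_sw\,ds$ shows $\phi$ is $1$-excessive, $\phi\in\mathrm{Dom}(H)\subset\F$ since $w\in L^2$, and $\phi\ge\One_K$, so the only nontrivial check is $\phi\in L^\infty_0$. For this, I would use the identity $\phi=F_t+e^{-t}T_t\phi$ with $F_t:=\int_0^t e^{-s}T_sw\,ds$: the estimate $\|e^{-t}T_t\phi\|_\infty\le e^{-t}\|w\|_\infty\to 0$ shows $F_t\to\phi$ in $L^\infty$, and by $L^\infty$-closedness of $L^\infty_0$ it suffices to establish $F_t\in L^\infty_0$ for each fixed $t>0$. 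The iteration $F_t=F_\delta+e^{-\delta}T_\delta F_{t-\delta}$, applied $n$ times with $n\delta=t$, yields the telescoping identity $F_t=\sum_{k=0}^{n-1}e^{-k\delta}T_{k\delta}F_\delta$; using that $(1)$ propagates $L^\infty_0$-membership through each $T_{k\delta}$ and that $\|F_\delta\|_\infty\le\delta\|w\|_\infty\to 0$, one would complete the argument by combining $L^\infty$-smallness with $L^\infty$-closedness of $L^\infty_0$.

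The subtle point I expect to be the main obstacle is precisely converting the $L^\infty$-smallness of $F_\delta$ into genuine $L^\infty_0$-membership of $F_t$: the map $s\mapsto T_sw$ is only weak-$*$ continuous into $L^\infty$, so $F_\delta$ cannot be realized as a Bochner integral in $L^\infty$-norm, and the iteration cannot be closed by pure functional-analytic means. The resolution should use that $T_sw=T_\delta T_{s-\delta}w$ for $s\ge\delta$ together with $(1)$ forces $F_t-F_\delta$ to lie in the $L^\infty$-closed subspace generated by the images of $T_\delta$ applied to bounded subsets of $L^\infty$; combined with $\|F_\delta\|_\infty\to 0$ as $\delta\to 0$ and the closedness of $L^\infty_0$ in $L^\infty$, this would deliver $F_t\in L^\infty_0$ and hence $\phi\in L^\infty_0$, closing the cycle.
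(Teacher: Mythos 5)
Your proposal has a structural gap and a local one, and neither is cosmetic.

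First, the implications you actually establish are $(2)\Leftrightarrow(3)$, $(3)\Rightarrow(4)$, $(4)\Rightarrow(1)$ and $(1)\Rightarrow(4)$, and these do \emph{not} yield the four-fold equivalence: the directed graph of these implications has the two strongly connected components $\{2,3\}$ and $\{1,4\}$, joined only by the one-way edge $(3)\Rightarrow(4)$, so nothing in your argument leads from (1) or (4) back to (2) or (3). The missing link is exactly the implication the paper proves in place of yours, namely $(4)\Rightarrow(3)$, and it is the only point where nontrivial potential theory enters: by Remark \ref{potential}(3) (i.e.\ \cite[Proposition 2.8(iii)]{MOR}), for \emph{open} $B$ every $1$-excessive $u\in\scrL_{B,1}$ dominates $e_B$; given compact $K$ one picks a relatively compact open $B\supset K$, applies (4) to $\overline{B}$ to obtain a $1$-excessive $\phi\in\F\cap L^\infty_0(X)$ with $\phi\ge\One_B$, and concludes $e_K\le e_B\le\phi\in L^\infty_0(X)$. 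Your $(3)\Rightarrow(4)$, taking $\phi=e_K$, is the easy converse of this domination principle and cannot substitute for it. (Smaller points: your claim $e_K\in\F$ cites Remark \ref{potential}(1), which is stated only for symmetric forms, while the theorem concerns general regular semi-Dirichlet forms. On the positive side, your direct $(4)\Rightarrow(1)$ and the Fubini argument for $(2)\Rightarrow(3)$ are correct, and the former is a genuine shortcut compared with the paper's route $(4)\Rightarrow(3)\Rightarrow(2)\Rightarrow(1)$; but they do not repair the missing link.)

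Second, in $(1)\Rightarrow(4)$ you have correctly located the delicate point --- the paper itself passes over it with the assertion that the integral in \eqref{psi} converges with respect to the uniform norm --- but your proposed resolution fails. Property (1) controls $T_\delta$ only on $L^\infty_0(X)$; on all of $L^\infty(X)$ it says nothing (in the conservative case $T_\delta\One_X=\One_X\notin L^\infty_0(X)$ for noncompact $X$), so the ``closed subspace generated by the images under $T_\delta$ of bounded subsets of $L^\infty$'' is not contained in $L^\infty_0(X)$, and membership of $F_t-F_\delta=e^{-\delta}T_\delta F_{t-\delta}$ in it gives nothing; to exploit (1) on $T_\delta F_{t-\delta}$ you would need $F_{t-\delta}\in L^\infty_0(X)$, which is precisely the claim under proof at a shifted time, so the argument is circular. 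A rigorous way to close this step is to prove a lemma: a finite-time integral $\int_0^N e^{-s}T_sw\,ds$ of a uniformly bounded, jointly measurable integrand with $T_sw\in L^\infty_0(X)$ for all $s$ again lies in $L^\infty_0(X)$. Indeed, if not, there exist $\epsilon>0$, a compact exhaustion $(K_n)$ of $X$ and sets $B_n\subset X\setminus K_n$ of finite positive measure on which the integral exceeds $\epsilon$; averaging over $B_n$, Fubini produces sets $A_n\subset[0,N]$ of Lebesgue measure bounded below on which the $B_n$-average of $T_sw$ exceeds $\epsilon/(2N)$, and any $t^*\in\limsup_n A_n$ contradicts $T_{t^*}w\in L^\infty_0(X)$. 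Combined with the tail bound $\bigl\|\int_N^\infty e^{-s}T_sw\,ds\bigr\|_\infty\le e^{-N}\|w\|_\infty$ and the norm-closedness of $L^\infty_0(X)$, this yields $\phi=G_1w\in L^\infty_0(X)$ and completes $(1)\Rightarrow(4)$.
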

\begin{proof}(4)$\Rightarrow$(3): Let $K$ as in (3) and pick a relatively compact,
open $B\supset K$. By (4) there exists an $1$-excessive $\phi\in L^\infty_0(X)\cap\mathscr{F}$ such that
$\One_B\le\phi$. From Remark \ref{potential},(3) above, we get that $e_B\le\phi$ and so
$$
e_K\le e_B\in L^\infty_0(X) .
$$
(3)$\Rightarrow$(2): follows from 
$$
\PP^{\bullet}\{\sigma_K\le t\}\le e^t e_K.$$
(2)$\Rightarrow$(1): follows from Remark \ref{remarkpositivity}, (4) and
$$
T_t\One_K=\EE^{\bullet}[\One_K\circ X_t]\le \PP^{\bullet}\{\sigma_K\le t\} .
$$
(1)$\Rightarrow$(4): Pick $w$ as in \textbf{(P)} and set
\begin{equation}\label{psi}
\phi:= G_1w =\int_0^\infty e^{-t}T_tw dt.
\end{equation}
Condition \textbf{(P)} ensures that $\phi\ge \One_K$ and evidently, $\phi$ is $1$-excessive and belongs to  $\mathscr{F}$. Finally, by the $L^\infty_0$--diffusion property we know that $T_t\phi \in L^\infty_0(X)$ for $t\ge 0$ and since the integral in \eqref{psi} converges w.r.t the uniform norm, it follows that $\phi\in L^\infty_0(X)$ as asserted.
\end{proof}
Although in most concrete applications (see the next section) the weak Feller property is most conveniently checked in terms of the semigroup, the above result allows in principle to check this property directly through the equilibrium potential. Assume for example that $X$ is a metric space with metric $d(x,y)$, that $G_1$ has an integral kernel $\kappa_1(x,y)$ and that for every open relatively compact $U\subset X$ the function $G_1\One_U$ is bounded from below by a strictly positive lower semicontinuous function (so that one has property $(\textbf{P}))$. If then for any such $U$ there exist constants $c, \delta, \gamma>0$ such that
\begin{equation}
\kappa_1(x,y) \leq c d(x,y)^{-\gamma}\ \text{for all $y\in U$ and all $x\in X$ with $ d(x,y)>\delta$},
\label{GreenUB}
\end{equation}
then one has $e_U\in L_0^\infty(X)$, which clearly entails the weak Feller property. Indeed, let $n\in\N$ be large enough and let $x\in U$ be such that $d(x,\overline U)>n$. From formula (\ref{PotRepresentation}) we obtain
\begin{align*}
e_U(x) & = \int_X \kappa_1(x,y)\,d\mu_U(y) =\int_{\overline U} \kappa_1(x,y)\,d\mu_U(y) 
\leq c \int_{\overline U} d(x,y)^{-\gamma}\,d\mu_U(y)\\
& \leq \frac{c}{n^\gamma} \mu_U(\overline U) = \frac{c}{n^\gamma} \mathrm{cap}(U),
\end{align*}
which letting $n\to\infty$ proves the claim. Situations as above can be inferred for example from results such as \cite[Section 5.6]{LSCoste}) \cite[Lemma 7.7]{GH} \cite[Lemma 2.4]{CGL}.

\section{Examples of weakly Feller $\En's$}\label{ex}

In this section we give some classes of examples to illustrate the weak Feller property.

\begin{exa}\textbf{(Multiplication operators)} Let $h:X\to [0,\infty)$ be measurable and define a regular symmetric Dirichlet form by
 \begin{align*}
  &\F=\set{f\in L^2(X)}{h^{\frac12}f \in L^2(X)}\\
  &\En(f,g)=\int_X h\cdot f \cdot g \ dm.
   \end{align*}
Then $H$ is the maximally defined multiplication operator induced by $h$ and $T_t$ is the bounded multiplication operator induced by $e^{-th}$.

\begin{enumerate}[(i)]
 \item The $L^\infty_0$--diffusion property always holds, in view of $|T_t f|\le |f|$ for all $t\ge 0$.
  \item Property \textbf{(P)} is equivalent to $h\in L^\infty_{\mathrm{loc}}(X)$.
 \item The Feller property is equivalent to $h\in C(X)$, keeping in my mind that the required strong continuity in (\ref{fel}) automatically follows from Dini's theorem.
 \item One has $T_t(C_0(X))\subset C_0(X)$  for all $t\ge 0$, if and only if $T_t(C_b(X))\subset C_b(X)$ for all $t\geq 0$, which in turn is equivalent to $h\in C(X)$, as mentioned already.
\end{enumerate}
\end{exa}

\begin{exa}\label{ex1}\textbf{(Riemannian manifolds)} Let $X$ be a connected Riemannian manifold with $m$ its volume measure, $d(x,y)$ the geodesic distance and $\En$ is the regular strongly local symmetric Dirichlet form in $L^2(X)$ given by 
	\begin{align}\label{riem}
	\En(u,v)=\int_X (\nabla u,\nabla v) dm\quad\text{with domain of definition $\F:= W^{1,2}_0(X)$.}
	\end{align}
Then $H$ is the Friedrichs realization of the Laplace-Beltrami operator $-\Delta$ and $T_t$ has a jointly smooth integral kernel $p_t(x,y)$ for all $t>0$, which is strictly positive by the parabolic maximum principle. Accordingly, $\En$ satisfies property $\textbf{(P'')}$, as given $K\subset X$ compact one has $T_tw\geq \One_K$
 for 
 $$
 w:=\big(\m(K)\inf_{x,y\in K}p_t(x,y)\big)^{-1}\One_K .
$$
 Moreover, $\En$ induces a Feller semigroup, if and only if one has $T_t(C_0(X))\subset C_0(X)$: indeed, the required strong continuity follows from writing  
$$
P_tf-f=\int^t_0P_s\Delta f ds\quad\text{for all $t>0$, $f\in C_c^\infty(X)$,}
$$
and using that $P_s$ is a contraction in $L^\infty(X)$ We have borrowed this argument from the proof of Proposition 4.3 in \cite{pallara}. Moreover, $\En$ induces a Feller semigroup, if and only if \cite[Theorem 2.2]{PS} the unique minimal solution to (\ref{ddssdd}) vanishes at $\infty$, for all open relatively compact $U\subset X$ (see also \cite[Theorem 3.3]{WO} for analogous result on graphs). Since by Remark \ref{potential}(4) this solution is precisely $e_U$, it follows from Theorem \ref{Weak-Feller} that $\En$ is weakly Feller, if and only if it has the  Feller property. This is the case, e.g., if $X$ is geodesically complete and its Ricci curvature satisfies \cite{hsu}
$$
\mathrm{Ric}\geq -C_1-C_2d(x,y)^2\quad\text{for some $C_1,C_2\geq 0$.}
$$	
\end{exa}

\begin{exa}\label{ex2}\textbf{($\mathsf{RCD}^*$ spaces)} Let $X$ be a complete separable geodesic metric measure space with metric $d(x,y)$ and let $m$ be an inner regular Borel measure on $X$ with full support, which is finite on all open balls $B(x,r)$. The \emph{Cheeger form} 
	$$
	\mathrm{Ch}:L^2(X)\longrightarrow [0,\infty]
	$$
	on $X$ is defined to be the $L^2$-lower semicontinuous relaxation of the functional 
	$$
	\widetilde{\mathrm{Ch}}:L^2(X)\cap \mathrm{Lip}(X)\longrightarrow [0,\infty]
	$$
	given by 
	$$
	\widetilde{\mathrm{Ch}}(f):= \int_X   \Big( \limsup_{y\to x}\frac{|f(x)-f(y)|}{d(x,y)} \Big) ^2 \ dm(x),
	$$
	One gets a functional $\En$ on $L^2(X)$ with domain of definition $\F$ given by all $f$ with $\mathrm{Ch}(f)<\infty$ by setting $\En(f):=\mathrm{Ch}(f)$ for such $f$'s. The metric measure space $X$ is called \emph{infinitesimally Hilbertian}, if $\En$ is a quadratic form. For example, if $X$ is a complete connected Riemannian manifold with its geodesic distance and Riemannian volume measure, then $\En$ is given by (\ref{riem}). 
	
	Next, let us give the following definition of the $\mathsf{CD}^*(K,N)$ condition from \cite{SB}, which is a slight generalization of the original $\mathsf{CD}(K,N)$ condition from \cite{sturm}\cite{lott} having the advantage of admiting a natural local-to-global principle: let $\mathsf{P}(X)$ denote the set of all Borel probability measures on $X$ and let $\mathsf{P}_2(X)$ denote the elements of $\mathsf{P}(X)$ that have finite second moments. Given $\mu_0,\mu_1\in\mathsf{P}_2(X)$, the $L^2$-\emph{Wasserstein distance} is defined by
	$$ 
	W_2(\mu_0,\mu_1):=\inf\left\{\int_{X\times X}d(x,y)^2 dq(x,y):\>\>q\in\mathscr{C}(\mu_0,\mu_1)\right\},
	$$
	with $\mathscr{C}(\mu_0,\mu_1)$ the set of all couplings between $\mu_0$ and $\mu_1$. Any minimizer of the above infimum is called an \emph{optimal coupling} of $\mu_0$ and $\mu_1$. Then $\mathsf{P}_2(X)$ together with the $L^2$-Wasserstein distance is again a complete separable geodesic space (as $X$ is so). Given $K\in\mathbb{R}$, $t\in [0,1]$, $N\in [1,\infty)$ define the function
	$$
	\sigma_{K,N}^{(t)}:[0,\infty)\longrightarrow \IR\cup \{\infty\}
	$$
	by
	\begin{align*}
	\sigma_{K,N}^{(t)}(\theta):=
	\begin{cases}
		&\infty, \quad\text{if $K\theta^2\geq N\pi^2$}\\
		&\frac{\sin(t\theta\sqrt{K/N})}{\sin(\theta\sqrt{K/N})},\quad \text{if $0<K\theta^2< N\pi^2$}\\
		&t, \quad\text{if $K\theta^2=0$}\\
		&\frac{\sinh(t\theta\sqrt{-K/N})}{\sinh(\theta\sqrt{-K/N})}, \quad\text{if $K\theta^2< 0$}.
	\end{cases}
	\end{align*}
	Given $K\in\IR$, $N\in [1,\infty)$, the metric measure space $X$ is called an \emph{$\mathsf{CD}^*(K,N)$ space}, if for all $\mu_0,\mu_1\in \mathsf{P}(X)$ with bounded support and $\mu_0,\mu_1\ll\m$, there exists an optimal coupling $q$ of them and a geodesic $(\mu_t)_{t\in [0,1]}$ in $\mathsf{P}_2(X)$ connecting them, such that
		$$ 
		\text{$\mu_t\ll\m$ and $\mu_t$ has a bounded support for all $t\in [0,1]$},
		$$
		and such that for all $N'\in[N,\infty)$, $t\in [0,1]$ one has
			\begin{align*}
				&\int_X \rho_t(x)^{1-1/N'} \m(dx)\geq \\
				&\int_{X\times X} \Big(\sigma_{K,N'}^{(1-t)}(\mathfrak{d}(x_0,x_1))\rho_0(x_0)^{-1/N'} +\sigma_{K,N'}^{(t)}(\mathfrak{d}(x_0,x_1))\rho_1(x_1)^{-1/N'}\Big)\>dq(x_0, x_1), 
			\end{align*}
			where $\rho_t$ denotes the Radon-Nikodym density of $\mu_t$ with respect to $\m$ for all $t\in [0,1]$. Finally, given $K\in\IR$, $N\in [1,\infty)$, the metric measure space $X$ is called an \emph{$\mathsf{RCD}^*(K,N)$ space}, if it is an infinitesimally Hilbertian $\mathsf{CD}^*(K,N)$ space. This definition is shown to be equivalent to weak Bochner type inequality in \cite{EKS}. Examples of such space include complete connected Riemannian manifolds of dimension $n\leq N$ and Ricci curvature bounded from below by $K$ with their geodesic distance and volume measure, but also possibly very singular spaces such as Alexandrov spaces of dimension $n\leq N$ with curvature $\geq K/(n-1)$ and their $n$-dimensional Hausdorff measure. \vspace{1mm}

			We fix an $\mathsf{RCD}^*(K,N)$ space $X$ in the sequel. \vspace{1mm}
			
			Then, by the validity of the Bishop-Gromov volume estimate \cite{SB}, there exists $C>0$ such that for all $0<r\leq R$ and all $x\in X$ one has the volume local doubling
			
			\begin{equation}\label{eq:LVD}
				\frac{\m(x,R)}{\m(x,r)}\leq C \mathrm{e}^{C R} (R/r)^N,
			\end{equation}
		which implies that $X$ is locally compact. Here, we have set $\m(y,s):=\m(B(y,s))$. Moreover, by a standard argument one gets the following local volume comparison inequality from the latter estimate: there is a constant $C'>0$ such that for all $t>0$, $x_1,x_2\in M$ and $\varepsilon>0$,
			
			\begin{equation}\label{eq:VC}
				\frac{\m(x_2,\sqrt{t})}{\m(x_1,\sqrt{t})}\leq C' \mathrm{e}^{\frac{C't}{{\epsilon}}} \mathrm{e}^{\epsilon\frac{\varrho(x_1,x_2)^2}{t}}.
			\end{equation}
As shown in \cite{KK}, $\En$ becomes a regular strongly local symmetric Dirichlet form. The semigroup $T_t$ has a jointly continuous integral kernel $p_t(x,y)$ for all $t>0$, which satisfies \cite{jiang} for all $x,y\in X$, $0<t<1$ the following Gaussian upper bounds
		$$
		C_1 m(x,\sqrt{t})^{-1} e^{-\frac{d(x,y)^2}{C_2t}}p_t(x,y) \leq C_3 m(x,\sqrt{t})^{-1} e^{-\frac{d(x,y)^2}{C_4t}}.
		$$
Clearly, this implies $p_t(x,y)>0$, so that property $\mathbf{(P'')}$ is satisfied as in the Riemannian case. For the proof of the $L^\infty_0$-diffusion property we can assume that $X$ is noncompact. We are going to show that $T_t1_K\in L^\infty_0(X)$ for all $0<t<1$. Indeed, the Gaussian upper bound implies
$$
|T_t\One_K(x)|\leq C_5C_3  \int_K e^{\frac{-d(x,y)^2}{C_4t}} dm(y),
$$
which clearly proves the claim, noting that 
$$
C_5:=\sup_{y\in K }m(y,\sqrt{t})^{-1}<\infty,
$$
as for every fixed $z\in X\setminus K$, by local volume comparison, one has 
$$
\inf_{y\in K}\m(y,\sqrt{t})\geq \inf_{y\in K} C'e^{-C't} e^{-d(z,y)^2/t} \m(z,\sqrt{t})>0.
$$
\end{exa}

While the previous two examples were both strongly local and symmetric, we finally provide an example which is nonlocal and nonsymmetric:

\begin{exa}\label{ex3}\textbf{(Nonsymmetric jump diffusions)} Assume $X=\IR^n$ with $m$ its Lebesgue measure, and fix $0<\alpha<2$ and a Borel function 
	\begin{align*}
	&\kappa:\IR^n\times \IR^n\longrightarrow  \IR,\quad\text{with}\\
	&\kappa_0\leq \kappa(x,z)\leq \kappa_1,\quad \kappa(x,z)=\kappa(x,-z),\\
	&|\kappa(x,z)-\kappa(y,z)|\leq \kappa_2|x-y|^\beta,
	\quad\text{for all $x,z,y\in\IR^n$,}
	\end{align*}
	for some constants $\kappa_0,\kappa_1,\kappa_2>0$, $0<\beta<1$. Define the operator $\tilde{H}$ on functions $f:\IR^m\to\IR$ by
	\begin{align}\label{stable}
	\tilde{H} f(x):=\lim_{\epsilon\to 0}\int_{\{|z|>\epsilon\}}(f(x+z)+f(z))\frac{\kappa(x,z)}{|z|^{m+\alpha}} dz,
	\end{align}
	whenever the expression makes sense.\\
	Let $p_t^\alpha$ be the heat kernelof the fractional Laplacian on $\R^d$. Then it follows from Theorem 1.1 in \cite{chen} (and a scaling argument) that there exists a unique jointly continuous function 
	$$
	p:(0,\infty)\times \IR^m\times \IR^m\longrightarrow [0,\infty),\quad (t,x,y)\longmapsto p_t(x,y)
	$$
	which has the following three properties:
	\begin{itemize}
		\item[i)] one has 
	$$
	\partial_tp_t(x,y)=	\tilde{H}p_t(\bullet,y)(x),\quad\text{for all $t>0$, $x\neq y$,}
	$$
	\item[ii)] for all $t_0>0$ there exists a constant $c_1>0$ such that for all $0<t<t_0$ and all $x,y$ one has
	$$
	p_t(x,y)\leq c_1t(t^{1/\alpha}+|x-y|)^{-m-\alpha},
		$$
	\item[iii)] for every $0<\gamma<\min(\alpha,1)$ and $t_0>0$ there exists a constant $c_2>0$ such that for all $x,x',y\in\IR^m$, $0<t<t_0$ one has
	$$
	|p_t(x,y)-p_t(x',y)|\leq c_1 |x-x'|^{\gamma}t^{1-\gamma/\alpha}\left(t^{1/\alpha}+\min(|x-y|,|x'-y|)\right)^{-m-\alpha},
	$$
	\item[iv)] the map $t\mapsto \tilde{H}p_t(\cdot,y)(x)$ is continuous for all $x\neq y$, and for all $t_0>0$ there exists a constant $c_3>0$ such that for all $0<t<t_0$ and all $x\neq y$ one has
	$$
	| \tilde{H}p_t(\bullet,y)(x)|\leq c_3(t^{1/\alpha}+|x-y|)^{-m-\alpha},
	$$
	\item[v)] for all bounded uniformly continuous functions $f:\IR^m\to\IR$ one has 
	$$
	\lim_{t\to 0+}\left\| \int_{\IR^m} p_t(\bullet,y)f(y)dy -f\right\|_\infty =0.
	$$
	\end{itemize}
	Moreover, $(t,x,y)\mapsto p_t(x,y)$ satisfies the Chapman-Kolmogorov identities, and with
	$$
	T_tf(x):=\int_{\IR^m} p_t(x,y)f(y)dy,
	$$
	the semigroup $(T_t;t\geq 0)$ is analytic in $L^2(\IR^m)$, and one has the conservativeness
	$$
	\int_{\IR^m} p_t(x,y)dy= 1\quad\text{for all $t>0$, $x,y\in\IR^m$.}
	$$
Assuming that the function $\kappa^*(x,y):=\kappa(y,x)$ satisfies the the same assumptions as $\kappa$, then with $H$ the generator of this semigroup, we can define \cite{MR} a semi-Dirichlet form $\En$ in $L^2(\IR^m)$ by defining $\F$ to be the completion of $\dom(H)$ with respect to the norm  
	$$
	u\longmapsto \sqrt{\left\langle Hu,u\right\rangle +\left\|u \right\|_2},
	$$
	and defining $\En$ to be the unique bilinear extension of 
	$$
	(u,v)\longmapsto \left\langle Hu,v\right\rangle 
	$$
	to $\F$. It follows from the continuity of the heat kernel $p_t(x,y)$, ii), v) and Theorem \ref{appendix} that $\En$ is regular and induces a Feller semigroup (and so a weak Feller semigroup). Moreover, $\En$ is symmetric, if and only if $\kappa$ is so. This example generalizes the usual $\alpha$-stable Dirichlet form in $\IR^m$, where $\kappa$ is a constant. Even more general nonlocal semi-Dirchlet forms have been treated along the same lines in \cite{KSV}, where the function $z\mapsto |z|^{-m-\alpha}$ in (\ref{stable}) has been replaced by a general class of functions subject to certain growth conditions.
		
\end{exa}

\section{The decomposition principle}\label{decomposition}

Here, we state some spectral theoretic consequences of Feller type properties, mainly addressing the issue of stability of the essential spectrum. Since functional calculus will be involved, \vspace{2mm}

we fix a regular symmetric Dirichlet form $\En$ throughout this section, \vspace{2mm}

with $H$, $T_t$ denoting, as usual, the associated self adjoint operator and semigroup and $\En_U$, $H_U$, $T^U_t$ the corresponding restriction to an open $U\subset X$, as discussed in Section \ref{preliminary}. We denote by $\left\| \cdot \right\|$ the operator norm in $L^2(X)$. 

\noindent
Our main aim is to show that
$$
\sigma_{\mathrm{ess}}(H)=\sigma_{\mathrm{ess}}(H_{X\setminus K})
$$
provided $K$ is compact and $H$ is suitable. The essential spectrum of a self-adjoint operator $S$ can be characterized as
$$
\sigma_{\mathrm{ess}}(S)=\set{\lambda \in \CC}{S-\lambda \mbox{  is not a Fredholm operator}},
$$
see \cite[Chapter XIII.14]{ReedSimon4}. It was introduced by H. Weyl in a slightly different form in \cite{weyl10}, who also proved its essential property, namely that it is invariant under compact perturbations, see \cite{weyl09}.

\begin{defi}
 We say that $\En$ satisfies the \emph{weak spatial local compactness property}, if 
 $\One_K T_t$ is a compact operator for every compact $K\subset X$ and every $t>0$.
\end{defi}

Clearly, this property is weaker than spatial local compactness, introduced in \cite[Definition 2.1]{LS} which requires that $\One_A T_t$ is a compact operator for every Borel set $A\subset X$ with $m(A)<\infty$.

The main point is the following variant of the corresponding result in \cite{LS}:

\begin{lem}\label{cor-limit}
 Let $\En$ satisfy the weak Feller property.
 Let $K\subset X$ be compact. Then there is a
 sequence $(K_n)_{n\in\NN}$ of compact sets such that for every $t>0$
 \begin{equation}\label{limit}
 \lim_{n\to\infty}\left\| \One_{K_n}\left( T_t -T_t^{X\setminus K}  \right)- \left(T_t -T_t^{X\setminus K}\right)\right\| =0.
 \end{equation}
\end{lem}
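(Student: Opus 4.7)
The plan is to establish the operator--norm estimate
\[
\|(\One-\One_{K_n})(T_t-T_t^{X\setminus K})\|^{2}\le e^{t}\,\|\One_{X\setminus K_n}e_K\|_\infty,
\]
where $e_K$ is the $1$-equilibrium potential from (\ref{potential-expectation}), and then to choose $(K_n)$ using the weak Feller property. The specific shape of this bound is what will allow a single sequence to serve for \emph{every} $t>0$: the $t$-dependence is absorbed into the harmless prefactor $e^{t}$, while the $n$-dependent factor involves only the time-independent function $e_K$.

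To derive the estimate I would first unwind $(T_t-T_t^{X\setminus K})f$ via the Feynman--Kac formula (\ref{Feynman-Kac}): adopting the convention of Section~\ref{preliminary} by which $T_t^{X\setminus K}f$ is extended by zero on $K$, one has, for $f\in L^{2}(X)$ and $m$-a.e.\ $x\in X$,
\[
(T_t-T_t^{X\setminus K})f(x)=\eex\bigl[f(X_t)\One_{\{\sigma_K\le t<\zeta\}}\bigr].
\]
A pointwise Cauchy--Schwarz inequality under $\PP^{x}$, combined with the trivial bound $\One_{\{\sigma_K\le t\}}\le e^{t}e^{-\sigma_K}$ and the identity $e_K(x)=\eex[e^{-\sigma_K}]$ from (\ref{potential-expectation}), then yields
\[
|(T_t-T_t^{X\setminus K})f(x)|^{2}\le \PP^{x}\{\sigma_K\le t\}\cdot T_t(f^{2})(x)\le e^{t}\,e_K(x)\,T_t(f^{2})(x).
\]

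The passage to the $L^{2}$--operator norm is where the standing symmetry assumption of this section is used in an essential way. Integrating the previous inequality over $X\setminus K_n$, transferring the factor $T_t$ to the other side by self-adjointness, and using the $L^\infty$-contractivity of $T_t$ yields
\[
\|(\One-\One_{K_n})(T_t-T_t^{X\setminus K})f\|_{2}^{2}\le e^{t}\!\int_{X}\! T_t\bigl[\One_{X\setminus K_n}e_K\bigr]\, f^{2}\,dm \le e^{t}\,\|\One_{X\setminus K_n}e_K\|_\infty\,\|f\|_{2}^{2},
\]
which is the announced operator-norm estimate.

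Finally, by the weak Feller property and the implication (1)$\Rightarrow$(3) of Theorem~\ref{Weak-Feller}, we have $e_K\in L^\infty_0(X)$, and so by the very definition of $L^\infty_0(X)$ one can choose, once and for all, a sequence of compact sets $(K_n)_{n\in\NN}$ with $\|\One_{X\setminus K_n}e_K\|_\infty\to 0$. Combined with the estimate above this produces (\ref{limit}) for every $t>0$. The main obstacle is precisely what the above strategy is designed to overcome: a naive argument working directly with the time-dependent function $x\mapsto\PP^{x}\{\sigma_K\le t\}$ would only deliver a sequence $(K_n)$ depending on $t$, whereas the statement demands a single sequence valid for every $t>0$. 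Dominating $\PP^{x}\{\sigma_K\le t\}$ by the time-independent potential $e_K$ at the cost of the innocuous prefactor $e^{t}$ is the trick that resolves this.
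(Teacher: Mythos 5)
Your proof is correct and is essentially the paper's own argument: the paper proves this lemma by citing \cite[Corollary 2.5]{LS}, whose proof is exactly your chain of Feynman--Kac representation of $T_t-T_t^{X\setminus K}$, Cauchy--Schwarz, and the domination $\PP^{\bullet}\{\sigma_K\le t\}\le e^t e_K$, with the compact sets chosen as $K_n\supset\{e_K>1/n\}$. Your appeal to Theorem \ref{Weak-Feller}, (1)$\Rightarrow$(3), to get $e_K\in L^\infty_0(X)$ and hence the compactness of the $K_n$ is precisely how the paper uses the weak Feller property.
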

This result is proved exactly as  \cite[Corollary 2.5.]{LS}, where
$$
K_n\supset \{e_K>1/n\}
$$
can be chosen compact in view of the $L^\infty_0$--diffusion property of $\En$, for $n$ large enough. With exacly the same argument as in \cite{LS}, one can now prove:

\begin{theo}[Decomposition principle]\label{theorem-decomposition-principle}
Let $\En$ satisfy the weak Feller property and $H$ the weak spatial local compactness property.
Let $K\subset X$ be compact. Then  the operator
 $\varphi (H_{X\setminus K}) - \varphi (H)$ is compact for every $\varphi \in C_0 (\RR)$. In
particular,
$$\sigma_{\mathrm{ess}} (H_{X\setminus K}) = \sigma_{\mathrm{ess}} (H) .$$
\end{theo}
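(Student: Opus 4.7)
The plan is two-stage: first establish that the semigroup difference $T_t-T_t^{X\setminus K}$ is compact on $L^2(X)$ for every $t>0$, and then promote this to compactness of $\varphi(H)-\varphi(H_{X\setminus K})$ for all $\varphi\in C_0(\RR)$ via the functional calculus, from which the equality of essential spectra is a standard Weyl-type consequence.

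For the first stage, fix $t>0$. Lemma \ref{cor-limit} supplies a sequence of compact sets $(K_n)_{n\in\NN}$ with $\One_{K_n}(T_t-T_t^{X\setminus K})\to T_t-T_t^{X\setminus K}$ in operator norm. Because the compact operators form a norm-closed two-sided ideal, it suffices to show each $\One_{K_n}(T_t-T_t^{X\setminus K})$ is compact. Decomposing $\One_{K_n}(T_t-T_t^{X\setminus K})=\One_{K_n}T_t-\One_{K_n}T_t^{X\setminus K}$, the first summand is compact by the weak spatial local compactness hypothesis; for the second, the Feynman--Kac representation \eqref{Feynman-Kac} yields $0\le T_t^{X\setminus K}\le T_t$ pointwise as positivity-preserving operators, whence $0\le\One_{K_n}T_t^{X\setminus K}\le\One_{K_n}T_t$ as positive operators on $L^2(X)$, and the Dodds--Fremlin domination theorem (valid on the Banach lattice $L^2$, whose norm and that of its dual are order-continuous) transfers compactness from the majorant to $\One_{K_n}T_t^{X\setminus K}$. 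I expect this domination step to be the main technical obstacle, as it is where symmetry of $\En$, positivity of both semigroups, and the lattice structure of $L^2(X)$ are used decisively.

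For the second stage, the Laplace transform gives, for each $\alpha>0$, the identity $(H+\alpha)^{-1}-(H_{X\setminus K}+\alpha)^{-1}=\int_0^\infty e^{-\alpha t}(T_t-T_t^{X\setminus K})\,dt$, a norm-convergent Bochner integral of compact operators, hence itself compact. Applying the Stone--Weierstrass theorem to the $\ast$-algebra generated by $\{(\cdot+\alpha)^{-1}:\alpha>0\}$ on $[0,\infty)\supset\sigma(H)\cup\sigma(H_{X\setminus K})$ and using continuity of the functional calculus, one sees that $\varphi\mapsto\varphi(H)-\varphi(H_{X\setminus K})$ maps $C_0(\RR)$ continuously into the closed ideal of compact operators. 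Finally, $\sigma_{\mathrm{ess}}(H)=\sigma_{\mathrm{ess}}(H_{X\setminus K})$ follows from the classical Weyl-type criterion: for $\lambda$ in the essential spectrum of either operator and a bump $\varphi\in C_c(\RR)$ peaked at $\lambda$, a singular Weyl sequence is annihilated in the limit by the compact operator $\varphi(H)-\varphi(H_{X\setminus K})$, from which one extracts a Weyl sequence for the other operator at the same point.
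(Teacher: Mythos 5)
Your proposal is correct and takes essentially the same route as the paper: the paper proves this theorem by combining Lemma \ref{cor-limit} with ``exactly the same argument as in \cite{LS}'', which is precisely your argument --- compactness of $\One_{K_n}T_t$ from weak spatial local compactness, compactness of $\One_{K_n}T_t^{X\setminus K}$ via the Feynman--Kac domination $0\le T_t^{X\setminus K}\le T_t$ and the Dodds--Fremlin theorem, norm-closedness of the compact operators, and then Stone--Weierstrass plus a Weyl-sequence argument to pass to $\varphi(H_{X\setminus K})-\varphi(H)$ and the essential spectra. The only cosmetic deviation is that you route the Stone--Weierstrass step through resolvents via the Laplace transform, where the source applies it directly to the functions $s\mapsto e^{-ts}$; also, the domination step uses only positivity and the lattice structure of $L^2(X)$, not the symmetry of $\En$ (symmetry enters only through the functional calculus).
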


\begin{rk} The $\En$'s from Example \ref{ex1}, Example \ref{ex2} and Example \ref{ex3} (assuming symmetry in the last case) are weakly locally compact: indeed, in each case $T_t$ has a jointly continuous integral kernel, which shows that the Hilbert-Schmidt norm of the integral operator $\One_K T_t$ is finite for all compact $K\subset X$, $t>0$. Thus, for $\mathsf{RCD}^*$-spaces and symmetric jump diffusions Theorem \ref{theorem-decomposition-principle} applies directly. In the Riemannian case, one has to guarantee the Feller property, which holds, as noted above, if the manifold is complete with a Ricci curvature that does not decay to fast to $\infty$. In fact, as shown in \cite{donnelly} with completely different methods, the decomposition principle holds in the Riemannian case without any assumptions on the geometry. From this point of view, the main strenght of Theorem \ref{theorem-decomposition-principle} stems from the fact that it can deal with many local and nonlocal situations \emph{simultaniously}.
\end{rk}

We get the following variant of the Persson type theorem from \cite{LS}, 
where $\lim_{K\to X}$ stands for the limit along the net of 
compact subsets of $X$: 

\begin{theo}[Persson's theorem]\label{persson}
Let $\En$ satisfy the weak Feller property and $H$ the weak spatial local compactness property. Then,  
$$\inf \sigma_{\mathrm{ess}} (H) =\lim_{K\to X} \inf \sigma (H_{X \setminus K}).$$
\end{theo}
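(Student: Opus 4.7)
The plan is to prove the two inequalities separately, with the easier direction coming essentially for free from the decomposition principle (Theorem \ref{theorem-decomposition-principle}) and the harder one being a Weyl--type argument controlled by a finite--rank spectral projection. First note that if $K\subset K'$ are compact, then the form domain of $\En_{X\setminus K}$ contains that of $\En_{X\setminus K'}$, so the min--max principle gives $\inf\sigma(H_{X\setminus K})\le \inf\sigma(H_{X\setminus K'})$; hence the net $K\mapsto \inf\sigma(H_{X\setminus K})$ converges along compacts, its limit being the supremum. The inequality $\lim_{K\to X}\inf\sigma(H_{X\setminus K})\le \inf\sigma_{\mathrm{ess}}(H)$ then follows immediately from Theorem \ref{theorem-decomposition-principle}, which gives $\inf\sigma(H_{X\setminus K})\le \inf\sigma_{\mathrm{ess}}(H_{X\setminus K})=\inf\sigma_{\mathrm{ess}}(H)$ for every compact $K\subset X$.

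For the reverse inequality, set $e:=\inf\sigma_{\mathrm{ess}}(H)$ and $\mu_0:=\inf\sigma(H)\ge 0$. If $e=\mu_0$ the trivial domain monotonicity already gives $\inf\sigma(H_{X\setminus K})\ge \mu_0=e$, so one may assume $\mu_0<e$. Suppose, for contradiction, that the limit is strictly less than $e$: then there exist $\epsilon\in(0,e-\mu_0)$, an exhaustion $K_n\uparrow X$ by compacts, and normalized functions $u_n$ in the form domain of $\En_{X\setminus K_n}$ with $\En(u_n)=\En_{X\setminus K_n}(u_n)<e-\epsilon$. Because every such $u_n$ admits a quasi-continuous (hence $m$-a.e.) representative vanishing on $K_n$, for any $g\in L^2(X)$ and any compact $L\subset X$ one has $|\langle u_n,g\rangle|\le \|\One_{X\setminus L}\,g\|_2$ as soon as $L\subset K_n$, which forces $u_n\rightharpoonup 0$ in $L^2(X)$.

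To reach a contradiction I would project onto the spectral subspace $P:=\One_{[\mu_0,\,e-\epsilon/2]}(H)$. Since $e-\epsilon/2<e=\inf\sigma_{\mathrm{ess}}(H)$, the spectrum of $H$ in $[\mu_0,e-\epsilon/2]$ is a finite union of isolated eigenvalues of finite multiplicity, so $P$ has finite rank and is in particular compact; combined with $u_n\rightharpoonup 0$ this gives $\|Pu_n\|_2\to 0$. On the other hand, the spectral estimate
\[
\En(u_n)\ge \mu_0\|Pu_n\|_2^2 + \bigl(e-\tfrac{\epsilon}{2}\bigr)\bigl(1-\|Pu_n\|_2^2\bigr),
\]
combined with $\En(u_n)<e-\epsilon$, forces $\|Pu_n\|_2^2 \ge (\epsilon/2)/(e-\epsilon/2-\mu_0)>0$, contradicting the previous step. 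The main obstacle I anticipate is justifying $u_n\rightharpoonup 0$: it matters here that elements of the form domain of $\En_{X\setminus K_n}$ truly have their $L^2$-mass in $X\setminus K_n$ and not merely their $\En$-energy, which is exactly the content of q.e.\ vanishing of quasi-continuous representatives in the regular symmetric Dirichlet form setting. Beyond that, the weak Feller and weak spatial local compactness hypotheses enter the argument only through Theorem \ref{theorem-decomposition-principle}.
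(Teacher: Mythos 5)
Your proof is correct and takes essentially the same route as the paper, which obtains the theorem by combining the decomposition principle (Theorem \ref{theorem-decomposition-principle}) with the argument of \cite{LS}: the inequality $\lim_{K\to X}\inf\sigma(H_{X\setminus K})\le\inf\sigma_{\mathrm{ess}}(H)$ comes from $\sigma_{\mathrm{ess}}(H_{X\setminus K})=\sigma_{\mathrm{ess}}(H)$, and the reverse inequality from the finite rank of the spectral projection of $H$ below $\inf\sigma_{\mathrm{ess}}(H)$ together with the fact that elements of the form domain of $\En_{X\setminus K}$ vanish $m$-a.e.\ on $K$ (your weak-convergence contradiction is just a sequential reformulation of the norm estimate $\|\One_{(-\infty,\lambda]}(H)\One_{X\setminus K}\|\to 0$ used there). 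The only gap is the trivial edge case $\sigma_{\mathrm{ess}}(H)=\emptyset$, i.e.\ $e=+\infty$, where your threshold $e-\epsilon$ should be replaced by an arbitrary finite level above the (assumed finite) limit; the rest of your argument then goes through verbatim.
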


Finally, we present further results that connect Feller type properties with the stability of the essential spectrum. The first one shows that the compactness of $\varphi (H_{X\setminus K}) - \varphi (H)$ deduced in Theorem \ref{theorem-decomposition-principle} actually implies the corresponding spatial local compactness property:

\begin{prop}\label{prop5.5}
 Let $B\subset X$ be closed, $U:=X \setminus B$. If the operator
 $\varphi (H_U) - \varphi (H)$ is compact for every $\varphi \in C_0 (\RR)$ then  $\One_B T_t$ is a compact operator as well.
\end{prop}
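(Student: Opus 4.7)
The plan is to extract from the hypothesis the compactness of $T_t-T^U_t$ on $L^2(X)$ and then to use the convention that $T^U_t$ is supported in $U$. To begin, observe that $H$ and $H_U$ are nonnegative self-adjoint operators, since they are induced by the nonnegative symmetric forms $\En$ and $\En_U$; hence $\sigma(H)\cup\sigma(H_U)\subset[0,\infty)$. The function $\lambda\mapsto e^{-t\lambda}$ is not in $C_0(\RR)$, so we cannot directly apply the assumption to it, but we can pick any $\varphi\in C_0(\RR)$ which coincides with $\lambda\mapsto e^{-t\lambda}$ on $[0,\infty)$, for instance $\varphi(\lambda):=e^{-t|\lambda|}$. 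By the functional calculus for self-adjoint operators we then have $\varphi(H)=T_t$ and $\varphi(H_U)=T^U_t$, so that the hypothesis yields that $T_t-T^U_t$ is a compact operator on $L^2(X)$.

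Second, recall from Section \ref{preliminary} the convention that $T^U_t$, viewed as an operator on $L^2(X)$, is defined to be zero on $L^2(X\setminus U)=L^2(B)$ and takes values in functions supported in $U$. In particular $\One_B T^U_t=0$. Consequently,
$$
\One_B T_t = \One_B\bigl(T_t - T^U_t\bigr),
$$
and the right-hand side is the product of the bounded multiplication operator $\One_B$ with a compact operator, hence is compact. This is exactly the claim.

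The only nontrivial step is the first one, namely feeding the compactness hypothesis with the semigroup $T_t$ rather than with a general $\varphi(H)$, $\varphi\in C_0(\RR)$; the mild obstacle is that $e^{-t\lambda}$ itself is not in $C_0(\RR)$, and this is bypassed by exploiting the nonnegativity of $H$ and $H_U$ to freely redefine the function on $(-\infty,0)$ without changing the functional-calculus output. Everything else is bookkeeping with the standing convention that operators on $L^2(U)$ are extended by zero to $L^2(X)$.
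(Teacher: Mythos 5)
Your proof is correct, and it takes a genuinely different --- and more elementary --- route than the paper's. Both of your steps are sound: since this section fixes a \emph{symmetric} Dirichlet form, $H$ and $H_U$ are nonnegative self-adjoint operators, so $\varphi(\lambda)=e^{-t|\lambda|}$ indeed lies in $C_0(\RR)$ and satisfies $\varphi(H)=T_t$ and $\varphi(H_U)=T^U_t$ (the latter computed in $L^2(U)$ and then extended by zero, per the paper's convention), whence $T_t-T^U_t$ is compact; and since that same convention puts the range of $T^U_t$ inside $L^2(U)$, you get $\One_B T^U_t=0$, so $\One_B T_t=\One_B(T_t-T^U_t)$ is compact. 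The paper argues instead through the resolvent: the hypothesis yields (via part (1) of the remark preceding the proof) compactness of $D_1=G_1-(H_U+1)^{-1}$, which by a lemma of Ben Amor and Brasche factors as $D_1=(\check{H}^{1/2}JG_1)^*\,\check{H}^{1/2}JG_1$, where $J\colon(\F,\En_1)\to L^2(B,\One_B m)$ is the restriction map and $\check{H}\ge 1$ is the operator of the trace form; compactness of an operator of the form $A^*A$ forces $A$ to be compact, hence $JG_1=\One_B G_1$ is compact, and part (2) of the remark converts this into compactness of $\One_B T_t$. Your argument avoids the trace-form machinery and the passage through $\One_B G_1$ entirely, and it is self-contained apart from standard spectral calculus. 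What the paper's route buys is structural information: it exhibits the resolvent difference as a positive operator manufactured from the trace of $\En_1$ on $B$ (connecting to the large-coupling estimates of the cited work) and yields the compactness of $\One_B G_1$ as an intermediate statement; but for the bare assertion of the proposition, your shorter argument fully suffices.
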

We have chosen the assumption and the conclusion so as to fit the above results, many different equivalent formulations are at hand:
\begin{rk} Let $U\subset X$ be open and $B\subset X$ be closed.\\
1. The following properties are equivalent:
  \begin{itemize}
   \item $\varphi (H_U) - \varphi (H)$ is compact for every $\varphi \in C_0 (\RR)$,
   \item $T_t^U - T_t$ is compact for one (every) $t>0$,
   \item $D_\lambda:= G_\lambda - ( H_U + \lambda)^{-1} $ is compact for one (every) $\lambda>0$.
  \end{itemize}
2. The following properties are equivalent:
  \begin{itemize}
   \item $\One_B\varphi (H)$ is compact for every $\varphi \in C_0 (\RR)$,
   \item $\One_B T_t$ is compact for one (every) $t>0$,
   \item $\One_B G_\lambda $ is compact for one (every) $\lambda>0$.
  \end{itemize}
\end{rk}
For the proof of the second statement, see \cite[Thm 1.3]{LSW}; the first statement can be deduced with the help of the Stone-Weierstrass theorem, see  proof of Thm. 2.3 in \cite{LS}. 
\begin{proof}[Proof of Prop. \ref{prop5.5}]
 By \cite[Lemma 3]{BB}
\[
D_1:= G_1 - ( H_U + 1)^{-1} = (\check{H}^{1/2} J G_1)^* \check{H}^{1/2} J G_1,
\] 
where
\[
J:(\F,\mathscr{E}_1)\to L^2(B,\One_B m),\ u\mapsto u|_B,
\]
and $\check{H}$ is the selfadjoint operator of the trace of $\mathscr{E}_1$ w.r.t. the map $J$. Accordingly, $D_1$ is compact if and only if $\check{H}^{1/2} J G_1$ is compact as well. Since $\check{H}\ge 1$, we get that $J G_1$ is compact, whence the assertion in view of the preceding remark, part (2). 
\end{proof}
\begin{prop}\label{prop5.7}
 Let $\En$ satisfy the Feller property. If  $T_1^V$ is compact for some relatively compact open $V$, then $\One_K T_1$ is compact for every compact subset $K\subset V$. Consequently, if $T_1^V$ is compact for all relatively compact open $V$, then $H$ satisfies the weak spatial local compactness property. 
\end{prop}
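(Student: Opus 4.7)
The plan is to deduce the second claim from the first. By local compactness every compact $K\subset X$ lies in some relatively compact open $V$, and since $\En$ is symmetric the spectral theorem for the self-adjoint $H_V$ gives that compactness of $T_1^V=e^{-H_V}$ is equivalent to $H_V$ having discrete spectrum, hence to $T_t^V$ being compact for every $t>0$; the first claim (whose proof adapts verbatim to arbitrary $t>0$) then yields $\One_K T_t$ compact for all compact $K$ and all $t>0$, which is the weak spatial local compactness.

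For the first claim I would begin with the decomposition
\[
\One_K T_1 \;=\; \One_K T_1^V \;+\; \One_K\bigl(T_1 - T_1^V\bigr).
\]
The first summand is compact, being the product of a bounded and a compact operator. For the second summand, the strong Markov property of the Hunt process $\mathscr{M}$ applied at the exit time $\tau_V$, together with the Feynman-Kac formula \eqref{Feynman-Kac}, yields for $x\in V$
\[
(T_1 - T_1^V)f(x) \;=\; \EE^x\!\bigl[\One_{\{\tau_V\le 1\}}\,T_{1-\tau_V}f(X_{\tau_V})\bigr],
\]
exhibiting $T_1 - T_1^V$ as a boundary-hitting operator whose integrand is evaluated on the compact set $\overline V$; this is precisely the point where the Feller property will enter.

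To upgrade this structure to operator-norm compactness on $L^2$, I would pass to the resolvent via the equivalences in the remark following Proposition \ref{prop5.5}: compactness of $\One_K T_1$ is equivalent to compactness of $\One_K G_1$, and the analogous resolvent identity
\[
\One_K(G_1-G_1^V)f(x) \;=\; \One_K(x)\,\EE^x\!\bigl[e^{-\tau_V}\,G_1 f(X_{\tau_V})\bigr]
\]
factors the difference through the restriction map $f\mapsto G_1 f|_{\overline V}$ composed with the bounded $1$-order hitting operator $g\mapsto\EE^{\cdot}[e^{-\tau_V}g(X_{\tau_V})]$. The Feller property is used to show that this restriction is compact from $L^2(X)$ into $C(\overline V)$: Feller makes $G_1=\int_0^\infty e^{-t}T_t\,dt$ smooth bounded functions into continuous ones and guarantees strong continuity of $T_s$ in $s$, which combined with compactness of $T_t^V$ for every $t>0$ (from the spectral argument of the first paragraph) and an Arzel\`a--Ascoli argument on the compact set $\overline V$ delivers the required compactness; composition with the bounded inclusion $C(\overline V)\hookrightarrow L^2(\overline V)$ then yields the $L^2\to L^2$ compactness of $\One_K(G_1-G_1^V)$, and hence of $\One_K G_1$ and $\One_K T_1$. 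The main obstacle is to make this compactness of the restriction map rigorous, in particular converting the merely pointwise Feller-type regularity into operator-norm equicontinuity on $\overline V$; my plan is to split $G_1 = G_1^V + (G_1 - G_1^V)$ and to treat the ``boundary escape'' remainder via the hitting-operator formula above, exploiting that $X_{\tau_V}\in\overline V$ reduces the spatial integration to a compact set.
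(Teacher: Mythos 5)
Your reduction of the second claim to the first is fine, and so is the observation that compactness of $T_1^V$ forces compactness of $T_s^V$ for every $s>0$. The proof of the first claim, however, has two genuine gaps. The central one: the factorization of $\One_K(G_1-G_1^V)$ through the restriction map $f\mapsto G_1f|_{\overline V}$, viewed as a compact operator from $L^2(X)$ into $C(\overline V)$, cannot work because that map is in general not even well defined. The Feller property is a statement about preservation of $C_0(X)$; it provides no smoothing from $L^2(X)$ into continuous functions, and under the stated hypotheses $G_1f$ for $f\in L^2(X)$ need not have a continuous, or even locally bounded, representative (already on a complete Riemannian manifold of dimension $\geq 4$ with Ricci curvature bounded below --- a Feller situation --- one only has $G_1 f\in W^{2,2}_{\mathrm{loc}}$). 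So there is no function space on which to run Arzel\`a--Ascoli, and you concede yourself that this equicontinuity is ``the main obstacle''; the remedy you sketch (split $G_1=G_1^V+(G_1-G_1^V)$ and treat the remainder by the hitting formula) is circular, since the compactness of precisely that remainder is what is in question. Second, independent gap: you use $X_{\tau_V}\in\overline V$ to confine the integration to a compact set. Section 5 assumes only a regular symmetric Dirichlet form, and this circle of results is explicitly applied to symmetric jump diffusions; for a jump process the exit from $V$ occurs by a jump, so $X_{\tau_V}$ may land anywhere in the non-compact set $X\setminus V$, and the hitting formula does not localize onto $\overline V$.

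The paper's proof avoids fixed-time differences altogether, and this is the idea you are missing: the Feller property is exploitable only in the short-time limit. By \cite[Lemma 2.2]{KKT}, the Feller property yields $\sup_{x\in K}\PP^x\{\tau_V\le s\}\to 0$ as $s\searrow 0$, and by the argument of \cite[Proof of Proposition 2.4]{LS} this upgrades to the operator-norm statement $\|\One_K(T_s-T_s^V)\|\to 0$ as $s\searrow 0$. Writing $\One_K T_1=\One_K T_sT_{1-s}$ for $s<1$ and using that $\One_K T_s^V T_{1-s}$ is compact (by your own spectral argument for $T_s^V$), one gets
\[
\left\| \One_K T_sT_{1-s}-\One_K T_s^VT_{1-s}\right\| \le \left\| \One_K(T_s-T_s^V)\right\| \longrightarrow 0 \quad\text{as $s\searrow 0$},
\]
so $\One_K T_1$ is a norm limit of compact operators, hence compact. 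Nothing about $T_1-T_1^V$ at the fixed time $1$ is ever needed; proving its compactness directly, as you attempt, would require smoothing hypotheses of strong or doubly Feller type that the proposition does not assume.
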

\begin{proof}
 Let $K$ and $V$ be as in the assertion. By \cite[Lemma 2.2]{KKT} we get:
 \begin{equation*}
 \label{taube}
 \sup_{x\in K}\PP^x\left\{\tau_V\le s\right\}\to 0\mbox{  as  }s\searrow 0 .
 \end{equation*}
With an argument just as in \cite[Proof of Proposition 2.4]{LS} this gives that
\begin{equation}
 \label{norm}
 \| \One_K (T_s - T^V_s)\|\to 0\mbox{  as  }s\searrow 0 .
 \end{equation}
Since $T_1^V$ is compact, $T^V_s$ is compact as well by Remark \ref{prop5.5}, (1), applied to $H^V$ and $B=V$, we finally conclude that
\begin{align*}
 \One_K T_1 &=\One_K T_s T_{1-s}\mbox{  for  }s<1\\
 &=\lim_{s\searrow 0}\One_K T^V_s T_{1-s}
\end{align*}
is compact, as a norm limit (see \eqref{norm}) of compact operators.

The second assertion is evident.
\end{proof}
Using a compactness result \cite[Corollary 4.1]{KKT} for doubly Feller forms, we get:

\begin{coro}
 Let $\En$ be \emph{doubly Feller}, i.e., $\En$ induces a semigroup and one has the smoothing property $T_t(L^\infty(X))\subset C_b(X)$ for every $t>0$. Then $H$ satisfies the weak spatial local compactness property. In particular, the conclusions of Theorem \ref{theorem-decomposition-principle} and Theorem \ref{persson} hold.
\end{coro}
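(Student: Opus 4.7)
The plan is to chain three facts already in the paper: (i) the Feller property implies the weak Feller property, (ii) for doubly Feller forms the killed semigroup on any relatively compact open set is compact by \cite[Corollary 4.1]{KKT}, and (iii) Proposition \ref{prop5.7}.

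First, the Feller hypothesis built into the ``doubly Feller'' assumption, combined with Remark \ref{remarkpositivity}(2), immediately gives that $\En$ has the weak Feller property. Thus the only thing that remains is to verify the weak spatial local compactness of $H$.

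Second, I would fix an arbitrary relatively compact open $V\subset X$ and invoke \cite[Corollary 4.1]{KKT} to conclude that $T_1^V$ is compact on $L^2(V)$: the doubly Feller behaviour required by that corollary is inherited by the restricted form $\En_V$ from the doubly Feller property of $\En$, via the Feynman--Kac representation \eqref{Feynman-Kac} applied to bounded functions supported in $V$. With $T_1^V$ compact for every such $V$, the second clause of Proposition \ref{prop5.7} delivers the weak spatial local compactness of $H$.

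Finally, armed with both the weak Feller property and the weak spatial local compactness, the ``in particular'' assertions --- namely the decomposition principle (Theorem \ref{theorem-decomposition-principle}) and Persson's theorem (Theorem \ref{persson}) --- apply verbatim. The only step that may require real care is the second, where one has to check that the restricted form $\En_V$ genuinely satisfies the hypotheses of \cite[Corollary 4.1]{KKT}; I expect this to reduce to routine bookkeeping about restriction of doubly Feller behaviour to a relatively compact open subset, and to be the main (but mild) technical obstacle in an otherwise mechanical chain of implications.
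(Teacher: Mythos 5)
Your proposal is correct and follows essentially the same route as the paper, which likewise chains Remark \ref{remarkpositivity}(2), the compactness result \cite[Corollary 4.1]{KKT}, and Proposition \ref{prop5.7} to obtain the weak Feller property and the weak spatial local compactness, whence Theorems \ref{theorem-decomposition-principle} and \ref{persson} apply. The ``bookkeeping'' step you flag --- that the part semigroup on a relatively compact open $V$ inherits the doubly Feller property --- is precisely the content of the stability results in \cite{KKT} of which their Corollary 4.1 is a consequence, so it is not an additional obstacle.
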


\section{Appendix}

Let again $X$ be a locally compact compact separable metrizable space which is equipped with a positive Radon measure $m$ with full support. We fix a semi-Dirichlet form $(\En,\F)$ on $L^2(X)$, with $H$ the associated sectorial operator, $T_t:=e^{-tH}$ the semigroup, and $G_\alpha$ the resolvent, both considered to be acting as contractions in $L^p(X)$. Finally, we are going to need the norm
$$
\left\| u\right\|_\En := \sqrt{\En(u,u)+\left\| u\right\|^2_2}\quad\text{on $\F$} 
$$ 
and the norm 
$$
\left\| u\right\|_H := \sqrt{\left\| Hu\right\|^2_2+\left\| u\right\|^2_2} \quad\text{on $\dom(H)$.}
$$

Our goal here is to prove the following result, which should be known to experts:

\begin{theo}\label{appendix} Assume one has 
	\begin{align}\label{fel1}
		&T_t(C_0(X))\subset C_0(X)\quad\text{for all $t>0$},\\
	\label{fel2}	&\| T_t\phi -\phi\|_\infty\to 0\quad\text{as $t\to 0+$  for all $\phi\in C_0(X)$}.
	\end{align}
Then $\En$ is regular.
\end{theo}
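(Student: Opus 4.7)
The plan is to verify the two defining properties of regularity separately: (a) $\F\cap C_c(X)$ is $\|\cdot\|_\infty$-dense in $C_c(X)$, and (b) $\F\cap C_c(X)$ is $\|\cdot\|_\En$-dense in $\F$. The unifying tool for both parts will be a soft-threshold truncation, applied to suitable images of the resolvent.

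First I would extract from (\ref{fel1})--(\ref{fel2}) the analogous Feller statements for $G_\alpha$. Writing $G_\alpha=\int_0^\infty \e^{-\alpha t}T_t\,dt$ as a Bochner integral in $C_0(X)$ gives $G_\alpha(C_0(X))\subset C_0(X)$, and rewriting $\alpha G_\alpha f=\int_0^\infty \e^{-s}T_{s/\alpha}f\,ds$ combined with dominated convergence and (\ref{fel2}) yields $\|\alpha G_\alpha f-f\|_\infty\to 0$ as $\alpha\to\infty$ for every $f\in C_0(X)$. Since $G_\alpha(L^2(X))=\dom(H)\subset\F$, this in particular gives $G_\alpha(C_c(X))\subset\F\cap C_0(X)$.

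Next I would introduce the normal contraction $\phi_\epsilon(t):=\mathrm{sgn}(t)(|t|-\epsilon)^+$, and observe the identity
\[
\phi_\epsilon(v)=(v-v\wedge\epsilon)-\bigl((-v)-(-v)\wedge\epsilon\bigr).
\]
The Markovian property (\ref{markovian}), applied to both $v$ and $-v$, then shows $\phi_\epsilon\circ v\in\F$ for every $v\in\F$. If in addition $v\in C_0(X)$, the set $\{|v|>\epsilon\}$ is relatively compact, so $\phi_\epsilon\circ v\in\F\cap C_c(X)$, and $\|\phi_\epsilon\circ v-v\|_\infty\le\epsilon$. For (a), given $\phi\in C_c(X)$ the approximants $\phi_\epsilon\circ(\alpha G_\alpha\phi)\in\F\cap C_c(X)$ converge to $\phi$ uniformly as $\alpha\to\infty$, $\epsilon\to 0$. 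For (b), the set $G_\alpha(C_c(X))$ is $\|\cdot\|_\En$-dense in $\F$ (because $\dom(H)$ is $\En$-dense in $\F$, $C_c(X)$ is $L^2$-dense, and $G_\alpha\colon L^2(X)\to\F$ is bounded), so it suffices to $\|\cdot\|_\En$-approximate each $v=G_\alpha f\in\F\cap C_0(X)$ (with $f\in C_c(X)$) by $\phi_\epsilon\circ v$: the $L^2$-part goes to zero by dominated convergence, while for the form part one shows that $\phi_\epsilon\circ v$ stays bounded in $(\F,\|\cdot\|_\En)$, converges weakly to $v$ (the weak limit being identified via $L^2$-convergence), and then closes the argument via \emph{weak plus norm convergence $\Rightarrow$ strong convergence} in the Hilbert space underlying $\|\cdot\|_\En$.

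The main obstacle is the $\En$-convergence $\phi_\epsilon\circ v\to v$ in (b). In the symmetric Dirichlet framework this is routine because normal contractions do not increase the form; in the present semi-Dirichlet setting one has to invoke the sector condition to get a form bound of the shape $\En[\phi_\epsilon\circ v]\le c\,\En[v]$, and to justify the weak-compactness argument with the non-symmetric bilinear form $\En(\cdot,\cdot)$. This is the only technically delicate step — everything else reduces to the Feller hypotheses, the Markovian property, and standard resolvent estimates.
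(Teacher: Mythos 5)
Your strategy parallels the paper's own proof in all its essential ingredients: the Feller hypotheses are used to show that $G_\alpha$ maps $C_c(X)$ into $\F\cap C_0(X)$ with $\alpha G_\alpha\phi\to\phi$ uniformly, the set $G_\alpha(C_c(X))$ is used as an $\|\cdot\|_{\En}$-dense subset of $\F$, and a Markovian truncation is used to pass from $C_0(X)$ to compactly supported elements of $\F$. The paper merely organizes these steps differently (two propositions: regularity is equivalent to density of $\F\cap C_0(X)$ in $(\F,\|\cdot\|_{\En})$ and in $C_0(X)$; and $\dom(H)\cap C_0(X)$ is dense in graph norm and uniformly), whereas you truncate the resolvents directly. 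The only substantive difference is your final convergence step, and that is where there is a genuine gap.

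The implication \emph{weak convergence plus norm convergence $\Rightarrow$ strong convergence} is valid in a Hilbert space, but it requires convergence of the norms $\|\phi_\epsilon\circ v\|_{\En}\to\|v\|_{\En}$, and you only establish boundedness. Indeed, the sector-condition bound you anticipate does hold: from $\En(v\wedge\epsilon,(v-\epsilon)^+)\ge 0$ one gets $\En[(v-\epsilon)^+]\le \En(v,(v-\epsilon)^+)\le K\,\En[v]^{1/2}\En[(v-\epsilon)^+]^{1/2}$, hence $\En[(v-\epsilon)^+]\le K^2\En[v]$, and likewise for $(-v-\epsilon)^+$, so $\En[\phi_\epsilon\circ v]\le 4K^2\En[v]$. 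But with sector constant $K>1$ this gives no way to prove $\limsup_{\epsilon\to 0}\En_1[\phi_\epsilon\circ v]\le\En_1[v]$; weak lower semicontinuity only gives the opposite inequality, so convergence of norms is not available and your mechanism does not close. (It does close in the symmetric case, where normal contractions do not increase the form, i.e.\ the bound holds with constant $1$ --- but the theorem is stated for semi-Dirichlet forms.) The repair is standard, and is exactly what the paper does implicitly when it says ``a subsequence of $\phi_n$ does the job'': you do not need strong convergence of the truncations themselves. Since $\F\cap C_c(X)$ is a linear, hence convex, subspace, and $\phi_\epsilon\circ v\to v$ weakly in the Hilbert space $(\F,\|\cdot\|_{\En})$ (boundedness plus identification of the limit via $L^2$-convergence, as you argue), Mazur's lemma or the Banach--Saks theorem produces convex combinations of the $\phi_\epsilon\circ v$ converging to $v$ strongly, and these still lie in $\F\cap C_c(X)$ --- which is all that density requires. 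With this substitution your proof is complete and essentially coincides with the paper's argument in a different arrangement.
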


The proof relies on the following auxiliary results:

\begin{prop} The following statements are equivalent:
\begin{itemize}
	\item $\En$ is regular.
	\item $\F\cap C_0(X)$ is dense in $(\F,\left\| \bullet\right\|_\En)$ and in $(C_0(X),\left\| \bullet\right\|_\infty)$. 
	
\end{itemize}	

\end{prop}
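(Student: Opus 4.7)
The forward implication is immediate from the inclusions $\F\cap C_c(X)\subset\F\cap C_0(X)$ and $C_c(X)\subset C_0(X)$: density of $\F\cap C_c(X)$ in $\F$ gives density of the larger set $\F\cap C_0(X)$, and density of $\F\cap C_c(X)$ in $C_c(X)$ combined with the standard sup-norm density of $C_c(X)$ in $C_0(X)$ gives density of $\F\cap C_0(X)$ in $C_0(X)$.

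For the converse, I would reduce everything to the claim that every $v\in\F\cap C_0(X)$ can be approximated, simultaneously in $\left\|\cdot\right\|_\En$ and in $\left\|\cdot\right\|_\infty$, by functions in $\F\cap C_c(X)$. Granted this, any $u\in\F$ is first approximated in $\left\|\cdot\right\|_\En$ by some $v\in\F\cap C_0(X)$ (hypothesis) and then $v$ by some $w\in\F\cap C_c(X)$ (the claim); similarly, any $u\in C_c(X)\subset C_0(X)$ is approximated in sup norm first by $v\in\F\cap C_0(X)$ and then by $w\in\F\cap C_c(X)$. To produce these approximants, I would truncate at height $1/n$: using that $v\wedge 0\in\F$ by the Markovian property one sees $v^\pm\in\F$, and the Markovian property applied again shows that
$$ v_n:=(v^+-1/n)_+-(v^--1/n)_+=v^+-v^+\wedge(1/n)-v^-+v^-\wedge(1/n) $$
lies in $\F$; continuity of $v$ transfers to $v_n$, and vanishing of $v$ at infinity ensures that $\{|v|>1/n\}$ is relatively compact, so $v_n\in\F\cap C_c(X)$. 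The sup-norm bound $\left\|v-v_n\right\|_\infty\leq 1/n$ is immediate.

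The decisive point is the $\En$-norm convergence $v_n\to v$. Reducing to the nonnegative case (writing $u:=v^\pm$, $\phi_n:=u\wedge(1/n)$, $u_n:=u-\phi_n$), the Markovian property yields $\En(\phi_n,u_n)\geq 0$; in the symmetric case this immediately gives the bounds $\En[\phi_n],\En[u_n]\leq\En[u]$. Since $\phi_n\to 0$ and $u_n\to u$ in $L^2$ by dominated convergence, $\F$-boundedness together with Hilbert weak compactness forces weak $\F$-convergence of these sequences to their $L^2$-limits, and lower semicontinuity of $\En[\cdot]$ gives $\En[u]\leq\liminf_n\En[u_n]$. Combined with $\En[u_n]\leq\En[u]$ one deduces $\En[u_n]\to\En[u]$, and then the Markov expansion $\En[u]=\En[\phi_n]+2\En(\phi_n,u_n)+\En[u_n]$, together with nonnegativity of the middle term, forces $\En[\phi_n]\to 0$, i.e., $\phi_n\to 0$ in $\F$.

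The main obstacle is the genuinely nonsymmetric semi-Dirichlet case, where only the one-sided inequality $\En(\phi_n,u_n)\geq 0$ is available: one must invoke the sector condition both to control the opposite cross term $\En(u_n,\phi_n)$ and to establish $\F$-boundedness of the truncations $\phi_n,u_n$. Once these are in place, the weak-compactness / lower-semicontinuity argument above can be closed by a bootstrap using $|\En(u_n,\phi_n)|\leq K\En[u_n]^{1/2}\En[\phi_n]^{1/2}$. This amounts to the standard fact that normal contractions are strongly continuous on $\F$ even in the semi-Dirichlet setting, which is the technical input underlying the whole argument.
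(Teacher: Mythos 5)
Your proposal follows essentially the same route as the paper's proof: the forward direction is the trivial inclusion argument, and for the converse both you and the paper truncate at height $1/n$, place the truncations in $\F\cap C_c(X)$ via the Markovian property \eqref{markovian}, and combine $L^2$-convergence with $\En$-boundedness and Hilbert-space weak compactness. In the symmetric case your argument is complete and correct (and in fact yields strong $\left\|\cdot\right\|_\En$-convergence of the truncations, more than is needed). The genuine gap is the nonsymmetric case, and that case is not optional here: the Proposition sits in the appendix, which deliberately fixes a general semi-Dirichlet form so that Theorem \ref{appendix} applies to the nonsymmetric jump diffusions of Example \ref{ex3}. Write $\phi_n=u\wedge(1/n)$, $u_n=u-\phi_n$ for $0\le u\in\F\cap C_0(X)$, so that \eqref{markovian} gives only $\En(\phi_n,u_n)\ge 0$. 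Your proposed closure, i.e. lower semicontinuity plus the sector bound $|\En(u_n,\phi_n)|\le K\En[u_n]^{1/2}\En[\phi_n]^{1/2}$, does not close: granting the bound $\sup_n\En[u_n]\le K^2\En[u]$ (which does hold, see below) and $\En[u]\le\liminf_n\En[u_n]$, the expansion
\[
\En[\phi_n]=\En[u]-\En[u_n]-\En(\phi_n,u_n)-\En(u_n,\phi_n)\le \En[u]-\En[u_n]+K\En[u_n]^{1/2}\En[\phi_n]^{1/2}
\]
gives for $L:=\limsup_n\En[\phi_n]$ only $L\le K^{2}\En[u]^{1/2}L^{1/2}$, i.e. $L\le K^4\En[u]$ --- this merely reproves boundedness and does not force $L=0$. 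Nor can you simply cite ``strong continuity of normal contractions on $\F$ in the semi-Dirichlet setting'': a continuity statement of exactly this type is what is at stake.

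There are two clean repairs. The first is the paper's (and explains why its proof is so short): strong convergence is not needed at all. Putting the truncation in the \emph{first} slot, where the Markov inequality has the right sign, gives boundedness directly,
\[
\En[u_n]=\En(u,u_n)-\En(\phi_n,u_n)\le\En(u,u_n)\le K\En[u]^{1/2}\En[u_n]^{1/2},
\]
hence $\En[u_n]\le K^2\En[u]$; a subsequence of $(u_n)$ then converges weakly in $\F$ to its $L^2$-limit $u$, and since the $\left\|\cdot\right\|_\En$-closure of the subspace $\F\cap C_c(X)$ is convex and closed, hence weakly closed, it contains $u$ --- which is all the density assertion requires. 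Second, if you do want strong convergence even in the nonsymmetric case, the correct estimate is again one-sided and tests against the fixed element $u$: by \eqref{markovian},
\[
\En[\phi_n]=\En(\phi_n,u)-\En(\phi_n,u_n)\le\En(\phi_n,u)\le K\En[\phi_n]^{1/2}\En[u]^{1/2},
\]
which first gives $\sup_n\En[\phi_n]\le K^2\En[u]$ and hence $\phi_n\rightharpoonup 0$ weakly in $\F$; since $v\mapsto\En(v,u)$ is a bounded linear functional on $\bigl(\F,\left\|\cdot\right\|_\En\bigr)$ by the sector condition, it is weakly continuous, so $\En(\phi_n,u)\to 0$ and therefore $\En[\phi_n]\to 0$, i.e. $u_n\to u$ in $\left\|\cdot\right\|_\En$.
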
 

\begin{proof} It suffices to show that every $\phi\in \F\cap C_0(X)$ can be approximated by a sequence in $\F\cap C_c(X)$ with respect to $\left\| \bullet\right\|_\En$. To this end, by the Markovian property of $\En$, we can assume $\phi\geq 0$. Then one has $\phi_n:=(\phi-1/n)_+\in \F\cap C_c(X)$ by the Markovian property, $\phi_n\to \phi$ in $L^2(X)$, and moreover $\sup_n \En(\phi_n,\phi_n)<\infty$. Thus, a subsequence of $\phi_n$ does the job.
\end{proof}

\begin{prop} Under (\ref{fel1}) and (\ref{fel2}), the space $D_0:=\dom(H)\cap C_0(X)$ is dense in $(\dom(H),\left\| \bullet\right\|_H )$ and in $(C_0(X),\left\| \bullet\right\|_\infty)$.
\end{prop}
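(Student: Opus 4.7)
The plan is to use the resolvent $G_\alpha=(H+\alpha)^{-1}$ as a regularization that simultaneously produces elements of $\dom(H)$ and of $C_0(X)$. The foundation of the argument is the following observation: under (\ref{fel1}) and (\ref{fel2}), the family $(T_t)_{t\ge 0}$ restricts to a strongly continuous contraction semigroup on $C_0(X)$. Contractivity on $C_0(X)$ comes from the sub-Markov property on $L^\infty(X)$, invariance of $C_0(X)$ is (\ref{fel1}), and strong continuity at $0$ is (\ref{fel2}). Consequently, for every $\phi\in C_0(X)$ and $\alpha>0$, the Laplace-type integral
\[
\int_0^\infty e^{-\alpha t}T_t\phi\,dt
\]
converges as an $\|\cdot\|_\infty$-Bochner integral to an element of $C_0(X)$, and by the $L^p$-consistency of the semigroup and resolvent, this element coincides with $G_\alpha\phi$ whenever $\phi\in L^2(X)\cap C_0(X)$.

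For density in $(C_0(X),\|\cdot\|_\infty)$: given $\phi\in C_0(X)$ and $\varepsilon>0$, I would first pick $\phi_0\in C_c(X)\subset L^2(X)\cap C_0(X)$ with $\|\phi-\phi_0\|_\infty<\varepsilon/2$, which is possible by local compactness. Then $\alpha G_\alpha\phi_0\in\dom(H)\cap C_0(X)=D_0$, and via the substitution $s=\alpha t$,
\[
\alpha G_\alpha \phi_0 -\phi_0 = \int_0^\infty e^{-s}\bigl(T_{s/\alpha}\phi_0-\phi_0\bigr)\,ds,
\]
whose $\|\cdot\|_\infty$-norm tends to $0$ as $\alpha\to\infty$ by (\ref{fel2}) and dominated convergence (the integrand is bounded by $2\|\phi_0\|_\infty e^{-s}$). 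A diagonal choice yields an element of $D_0$ within $\varepsilon$ of $\phi$.

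For density in $(\dom(H),\|\cdot\|_H)$: given $u\in\dom(H)$, set $f:=(H+1)u\in L^2(X)$ and approximate $f$ in $L^2(X)$ by a sequence $f_n\in C_c(X)$. Define $u_n:=G_1 f_n\in\dom(H)$, so that $(H+1)u_n=f_n\to f=(H+1)u$ in $L^2(X)$, which is precisely convergence $u_n\to u$ in the graph norm $\|\cdot\|_H$. Since $f_n\in C_c(X)\subset L^2(X)\cap C_0(X)$, the initial observation gives $u_n=G_1 f_n\in C_0(X)$, hence $u_n\in D_0$.

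The main obstacle is the preliminary step: verifying that $G_1$ preserves $C_0(X)$ starting only from the semigroup-level hypotheses (\ref{fel1}) and (\ref{fel2}), and verifying that the $C_0$-valued Bochner integral agrees with the $L^2$-resolvent on $L^2(X)\cap C_0(X)$. Once this consistency is established, both density statements are immediate consequences of the Hille–Yosida-type approximation $\alpha G_\alpha\phi\to\phi$ and the definition of the graph norm.
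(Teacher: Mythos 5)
Your proposal is correct and follows essentially the same route as the paper: establish that $(T_t)_{t\ge 0}$ restricts to a strongly continuous contraction semigroup on $(C_0(X),\|\cdot\\|_\infty)$, so that $G_\alpha$ is given on $C_0(X)$ by a uniformly convergent Laplace integral and hence preserves $C_0(X)$; then obtain graph-norm density by applying $G_1$ to $C_c(X)$-approximations of $(H+1)u$ in $L^2(X)$, and uniform density via the Hille--Yosida approximation $\alpha G_\alpha\phi\to\phi$ for $\phi\in C_c(X)$. The only differences are cosmetic (dominated convergence after the substitution $s=\alpha t$ instead of splitting the integral at $\delta$), and your choice $f:=(H+1)u$ in fact states cleanly what the paper intends where it writes $f:=G_1 g$.
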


\begin{proof} Note first that in this situation $(T_t;t\geq 0)$ becomes a strongly continuous contraction semigroup in $(C_0(X),\left\| \bullet\right\|_\infty)$. Thus, one has 
	\begin{align}\label{lap}
		G_\alpha \phi= \int^\infty_0 e^{-\alpha t}T_t \phi \ dt\quad\text{for every $\alpha>0$, $\phi\in C_0(X)$,}
	\end{align}
	where the integral converges in the uniform norm.\\
1. With 
$$
D_1:=\set{G_\alpha\phi}{\phi\in C_c(X),\alpha>0},
$$
in view of (\ref{lap}), one has $D_1\subset D_0$. Given $g\in \dom(H)$ we have $f:=G_1g\in L^2(X)$, so there exists a sequence $f_n$ in $C_c(X)$ with $f_n\to f$ in $L^2(X)$. It follows that with $g_n:=G_1f_n\in D_1$ we have
$$
 g_n\to g\quad\text{in $(\dom(H),\left\| \bullet\right\|_H )$,}
$$
showing that $D_1$ is dense in $(\dom(H),\left\| \bullet\right\|_H )$.\\
2. Given $\phi\in C_c(X)$ we have $\alpha G_\alpha \phi\in D_1$ for all $\alpha>0$ and 
$$
 \alpha G_\alpha \phi\to \phi\quad\text{in $(C_0(X),\left\| \bullet\right\|_\infty)$, as $\alpha\to\infty$,}
$$ 
in view of formula (\ref{lap}): indeed, given $\epsilon>0$ we can pick $\delta>0$ such that for all $t\in [0,\delta]$ one has $\left\| T_t\phi-\phi\right\|_\infty<\epsilon/2$. It then follows easily from decomposing the integral in (\ref{lap}) as $\int^\infty_0\cdots=\int^\delta_0\dots+\int^\infty_\delta\cdots$ that for all $\alpha>0$ one has 
$$
\left\| \alpha G_\alpha\phi-\phi\right\|_\infty\leq \epsilon/2+ 2\left\| \phi\right\|_\infty \alpha  e^{-\alpha\delta}.
$$
\end{proof}

\begin{proof}[Proof of Theorem \ref{appendix}] As the embedding
	$$
	(\dom(H),\left\| \bullet\right\|_H )\hookrightarrow (\F, \left\| \bullet\right\|_\En  )
	$$
	has a dense image, the second proposition gives that $\F\cap C_0(X)$ is dense in both, $(\dom(H),\left\| \bullet\right\|_H )$ and $(C_0(X),\left\| \bullet\right\|_\infty)$, so that the claim follows from the first proposition.
\end{proof}

\end{document}